\let\absast=\ast
\let\ast=\absast
\newcommand{\Aster}{\mathop{\scalebox{1.5}%
{\raisebox{-0.2ex}{$\ast$}}}}
\renewcommand{\le}{\leqslant}
\renewcommand{\ge}{\geqslant}
\newcommand{\ci}{
\begin{picture}(6,6)
\put(3,3.6){\circle*{3}}
\end{picture}}
\newcommand{\T}{\intercal}
\newcommand{\pen}{\!\medstar\!}
\newcommand{\mc}{\mathcal}
\newtheorem{theorem}{Theorem}
\newtheorem{lemma}{Lemma}
\newtheorem{corollary}{Corollary}
\theoremstyle{definition}
\newtheorem{example}{Example}
\newtheorem{definition}{Definition}
\theoremstyle{remark}
\newtheorem{remark}{Remark}
\DeclareMathOperator{\tr}{trace}
\DeclareMathOperator{\diag}{diag}
\begin{document}

\title{Specht's criterion for systems of linear mappings\thanks{Published in  Linear Algebra Appl.  519C (2017) 278--295.}}

\date{}

\author{Vyacheslav Futorny\thanks{Department of Mathematics, University of S\~ao Paulo, Brazil, {futorny@ime.usp.br}}
\and
Roger A. Horn\thanks{Mathematics
Department, University of Utah, Salt Lake City, Utah USA,
{rhorn@math.utah.edu}}
\and
Vladimir V.
Sergeichuk\thanks{Institute of Mathematics,
Kiev, Ukraine, {sergeich@imath.kiev.ua}}
}

\maketitle

\begin{abstract}
W. Specht (1940) proved that two $n\times n$ complex matrices $A$ and $B$ are unitarily similar if and only if $\operatorname{trace} w(A,A^{\ast}) = \operatorname{trace} w(B,B^{\ast})$ for every word $w(x,y)$ in two noncommuting variables. We extend his criterion and its generalizations by N.A. Wiegmann (1961) and N. Jing (2015) to an arbitrary system $\mc A$ consisting of complex or real inner product spaces  and linear mappings among them. We represent such a system by the directed graph $Q(\mc A)$, whose vertices are inner product spaces and  arrows are linear mappings. Denote by $\widetilde Q(\mc A)$ the directed graph obtained by enlarging to $Q(\mc A)$ the adjoint linear mappings. We prove that a system $\mc A$ is transformed by isometries of its spaces to a system $\mc B$ if and only if the traces of all closed directed walks in $\widetilde Q(\mc A)$ and $\widetilde Q(\mc B)$ coincide.

{\it AMS classification:} 15A21; 15A63; 16G20; 47A67

{\it Keywords:} Specht's criterion; Unitary similarity; Unitary and Euclidean representations of quivers
\end{abstract}

\section{Introduction}\label{intr}

Each system of complex inner product spaces and linear mappings among them can be represented by a directed graph, in which the vertices are inner product spaces and the arrows are linear mappings. We reduce the problem of classifying such systems to the problem of classifying complex matrices up to unitary similarity,  apply Specht's criterion for unitary similarity of complex matrices, and obtain a generalization of the following criteria:

\begin{description}
\item[Specht's criterion for unitary similarity] (\cite{spe}; see also \cite[Theorem 2.2.6]{h-j}, \cite{jin}, \cite[Theorem 6.3]{kap}, and \cite{pea}).
Two $n\times n$ complex matrices $A$ and $B$ are unitarily similar if and only if \begin{equation}\label{lur}
\tr w(A,A^{\ast}) = \tr w(B,B^{\ast})
\end{equation}
for every word $w(x,y)$ in two noncommuting variables.

\item[Wiegmann's criterion for simultaneous unitary similarity] (\cite{wie}; see also \cite[Theorem 6.2]{sha}). Let $(A_1,\dots,A_k)$ and $(B_1,\dots,B_k)$ be two $k$-tuples of $n\times n$ complex matrices. There exists a unitary matrix $U$ such that
$(U^{-1}A_1U,\dots,U^{-1}A_kU)=(B_1,\dots,B_k)$ if and only if
\begin{equation}\label{hsr}
\tr w(A_1,A_1^{\ast},\dots,A_k,A_k^{\ast}) = \tr w(B_1,B_1^{\ast},\dots,B_k,B_k^{\ast})
\end{equation}
for every word $w(x_1,y_1,\dots,x_k,y_k)$ in $2k$ noncommuting variables.

\item[Jing's criterion for simultaneous unitary equivalence] (\cite{jin}).
Let $(A_1,\dots,A_k)$ and $(B_1,\dots,B_k)$ be two $k$-tuples of $m\times n$ complex matrices. There exist unitary matrices $U$ and $V$ such that
$(UA_1V,\dots,UA_kV)=(B_1,\dots,B_k)$ if and only if
\begin{equation}\label{vfd}
\begin{split}
&\tr w(A_1^{\ast}A_1,\dots,A_i^{\ast}A_j,\dots,
A_k^{\ast}A_k) \\= &\tr w(B_1^{\ast}B_1,\dots,B_i^{\ast}B_j,\dots,
B_k^{\ast}B_k)
\end{split}
\end{equation}
for every word $w(x_{11},\dots,x_{ij},\dots,x_{kk})$ in $k^2$ noncommuting variables.
\end{description}

Complex $n\times n$
matrices $A$ and $B$ are \emph{unitarily similar} if $B=U^{-1}AU$ for some unitary matrix $U$; they are \emph{complex orthogonally similar} if $B = S^{-1} AS$ for some complex matrix $S$ such that $S^TS =I_n$.
Real $n\times n$ matrices $A$ and $B$ are \emph{real orthogonally similar} if $B = S^{-1} AS$ for some real matrix  $S$ such that $S^TS =I_n$.

Pearcy \cite{pea} (see also \cite[Section 2-6]{kap} and \cite[Section 2.2]{h-j}) noticed that Specht's criterion also holds for real matrices with respect to real orthogonal similarity.
Jing \cite{jin} proved that his, Specht's, and Wiegmann's criteria hold for real matrices with respect to real orthogonal similarity (real orthogonal equivalence in Jing's criterion) and for complex matrices with respect to complex orthogonal similarity (equivalence) if transposed matrices are used instead of conjugate transposed matrices in \eqref{lur}--\eqref{vfd}.

Specht's criterion requires infinitely many tests. Pearcy \cite[Theorem 1]{pea} proved that it suffices to verify the condition \eqref{lur} for all words of length at most $2n^2$. Laffey \cite{laf} showed that it is sufficient to verify \eqref{lur} for all words of length at most the smallest integer that is greater than or equal to $(2n^2+4)/3$, and hence
for all words of length at most $n^2$. A better bound
\begin{equation}\label{vrv}
-2+\frac n2+
n\cdot\sqrt{\frac{2n^2}{n-1}+\frac 14}
\end{equation}
on the sufficient word length
was given by Pappacena \cite{pap}; see also \cite[Theorem 2.2.8]{h-j}.

Two alternative  approaches to the problem of unitary similarity involve different ideas:
\begin{itemize}
  \item
Arveson \cite[Theorems 2 and 3]{arv0} (see also \cite{far2,far,far1}) proved that
if $A,B\in \mathbb C^{n\times n}$ and  $A$ is not unitarily similar to a direct sum of square matrices of smaller sizes, then
$A$ and $B$ are unitarily similar if and only if
\[
\|X\otimes I_n+Y\otimes A\|=\|X\otimes I_n+Y\otimes B\|
\]
for all $X,Y\in \mathbb C^{n\times n}$.
Here, $\|M\|$ is the spectral norm (largest singular value) of $M$.

  \item
Littlewood \cite{lit} (see also \cite{sha}) constructed an algorithm  that reduces each square complex matrix $A$ by unitary similarity transformations to a ``canonical'' matrix $A_{\text{can}}$ in such a way that $A$ and $B$ are unitarily similar if and only if $A_{\text{can}}=B_{\text{can}}$. Littlewood's algorithm was extended in \cite{ser_func,ser_unit} to unitary representations of a quiver.
\end{itemize}

\section{Representations and ${}^{\pen}$unitary representations of a quiver}

\subsection{Representations of a quiver}

Classification problems for systems of linear mappings can be formulated in terms of quivers
and their representations introduced by P.~Gabriel in \cite{gab}. A \emph{quiver} is a directed graph (loops and multiple arrows are allowed); we suppose that its vertices are $1,\dots,t$.
Its \emph{representation} $\mathcal A=(\mathcal A_{\alpha },\mathcal U_v)$ over a field $\mathbb F$ is given by assigning to each vertex $v$ a vector space $\mathcal U_v$ over $\mathbb F$ and to each arrow $\alpha : u \longrightarrow  v$ a linear mapping $\mathcal A_{\alpha}: \mathcal U_u \to \mathcal U_v$.
The vector
\begin{equation}\label{e3y}
\dim \mathcal A:=
(\dim \mathcal U_1,\dots,\dim \mathcal U_t)
\end{equation}
is the \emph{dimension} of the representation $\mathcal A$.

For example, each representation
\begin{equation}\label{kte}
\mathcal A:\ \raisebox{20pt}{\xymatrix{
 &{\mathcal U_1}&\\
 {\mathcal U_2}
  \save !<-2mm,0cm>\ar@(ul,dl)@{->}_{\mathcal A_{\gamma}}\restore
 \ar@{->}[ur]^{\mathcal A_{\alpha}}
\ar@<0.4ex>[rr]^{\mathcal A_{\delta}}
 \ar@<-0.4ex>[rr]_{\mathcal A_{\varepsilon}} &&{\mathcal U_3}
 \ar[ul]_{\mathcal A_{\beta}}
 \save !<2mm,0cm> \ar@(ur,dr)^{\mathcal A_{\zeta}}\restore
 }}
\end{equation}
of the quiver
\begin{equation}\label{2j}
\raisebox{20pt}{\xymatrix{
 &{1}&\\
 {2}\ar@(ul,dl)@{->}_{\gamma}
 \ar@{->}[ur]^{\alpha}
\ar@<0.4ex>[rr]^{\delta}
 \ar@<-0.4ex>[rr]_{\varepsilon} &&{3}
 \ar[ul]_{\beta}
 \ar@(ur,dr)^{\zeta}
 }}
\end{equation}
consists of vector spaces $\mathcal U_1,\
\mathcal U_2,\ \mathcal U_3$ and linear mappings $\mathcal A_{\alpha }$, $\mathcal A_{\beta}$, $\mathcal A_{\gamma }$, $\mathcal A_{\delta }$, $\mathcal A_{\varepsilon }$, $\mathcal A_{\zeta}$.

An \emph{oriented cycle $\pi$ of length $\ell\ge 1$} in a quiver $Q$ is  a sequence of arrows of the form
\begin{equation}\label{iyw}
\pi:\quad\xymatrix{
{v_1}
\ar@{->}@/_2pc/[rrr]^{\alpha_{\ell}}
\ar@{<-}[r]^{\ \alpha_1}&
v_2\ar@{<-}[r]^{\alpha_2\ } &
{\ \cdots\ }&{v_{\ell}}
\ar@{->}[l]_{\quad \alpha_{\ell-1}\ }}
\end{equation}
in which some of the vertices $v_1,v_2,\dots,v_{\ell}$ and some of the arrows $\alpha_1,\dots,\alpha _{\ell}$ may coincide; see \cite[Section 2.1]{rin}. Thus, an oriented cycle is a closed directed walk, in which vertices and arrows may repeat.

For each representation $\cal A$ of $Q$ and any cycle \eqref{iyw},
define the cycle of linear mappings
\[
\mathcal A(\pi):\quad\xymatrix{
{\mathcal U_{v_1}}
\ar@{->}@/_2pc/[rrr]^{\mathcal A_{\alpha_{\ell}}}
\ar@{<-}[r]^{\mathcal A_{\alpha_1}}&
\mathcal U_{v_2}\ar@{<-}[r]^{\mathcal A_{\alpha_2}} &
{\ \cdots\ }&{\mathcal U_{v_{\ell}}}
\ar@{->}[l]_{\ \mathcal A_{\alpha_{\ell-1}}}}
\]
on \eqref{iyw}. Write
\[
\tr \mathcal A(\pi):=\tr\left(\mathcal A_{\alpha_1}
\mathcal A_{\alpha_2}\cdots\mathcal A_{\alpha_{\ell}}\right);
\]
this number \emph{does not depend on the choice of the initial vertex} $v_1$ in the cycle since the trace is invariant under cyclic permutations:
\[
\tr (\mathcal A_{\alpha_1}
\mathcal A_{\alpha_2}\cdots\mathcal A_{\alpha_{\ell}})
=\tr (\mathcal A_{\alpha_{\ell}}\mathcal A_{\alpha_1}\cdots
\mathcal A_{\alpha_{{\ell}-1}})=
\tr (\mathcal A_{\alpha_{{\ell}-1}}\mathcal A_{\alpha_{\ell}}\cdots
\mathcal A_{\alpha_{{\ell}-2}})=\cdots
\]

\subsection{${}^{\pen\!}$Unitary representations of a quiver}

We extend Specht's criterion to systems of linear mappings on complex inner product spaces. A \emph{complex inner product space} (also called a Hermitian space, a finite-dimensional Hilbert space, or a unitary space) is a complex vector space with scalar product given by  a positive definite Hermitian form. We also extend Specht's criterion to systems of linear mappings on \emph{complex Euclidean spaces}, which are complex vector spaces with scalar product given by a  nonsingular symmetric bilinear form.

For convenience in studying
the respective spaces simultaneously, a complex inner product space
is called a \emph{{$^\ast$\!\/}unitary space}, and a complex Euclidean space is called a \emph{$^{\T\!}$unitary space}.

Let $\pen\in\{\T,\Aster\}$.
For each linear mapping $\mathcal A: \mathcal U\to \mathcal V$ between  $^{\pen}$unitary spaces $\mathcal U$ and $\mathcal V$, we define the \emph{adjoint} mapping $\mathcal A^{\pen}:\mathcal V\to \mathcal U$ via
\begin{equation}\label{gkm}
\text{$(\mathcal Ax,y)=(x,\mathcal A^{\pen}y)$\qquad for all $x\in \mathcal U$ and $y\in\mathcal V$.}
\end{equation}

The following definition generalizes the definition of unitary representations of quivers given in \cite{ser_func,ser_unit}.
\begin{definition}\label{hro}
Let $Q$ be a quiver with vertices $1,\dots,t$, and let $\pen\in\{\T,\Aster\}$ be fixed.
\begin{itemize}
  \item A \emph{$^{\pen\!}$unitary representation} $\mathcal A=(\mathcal A_{\alpha },\mathcal U_v)$ of $Q$ is given by assigning to each vertex $v$ a $^{\pen}$unitary space $\mathcal U_v$ and to each arrow $\alpha : u \longrightarrow  v$ a linear mapping $\mathcal A_{\alpha}: \mathcal U_u \to \mathcal U_v$.

  \item Two $^{\pen}$unitary representations $\mathcal A=(\mathcal A_{\alpha },\mathcal U_v)$ and $\mathcal B=(\mathcal B_{\alpha },\mathcal V_v)$ of $Q$ are \emph{isometric} if
there exists a family of isometries (linear bijections that preserve the scalar products)
$\varphi_1:\mathcal U_1\to \mathcal V_1,\dots,
\varphi_t:\mathcal U_t\to \mathcal V_t$ such that the
diagram\\[-4mm]
\begin{equation}\label{mmd}
\begin{split}
\xymatrix{
\mathcal U_u\ar[r]^{\mathcal A_{\alpha} }
\ar[d]_{\varphi_u}
&\mathcal U_v\ar[d]^{\varphi_v}
\\
\mathcal V_u\ar[r]^{\mathcal B_{\alpha} }&
\mathcal V_v}
\end{split}
\end{equation}
is commutative ($\varphi_v\mathcal A_{\alpha }=\mathcal B_{\alpha
}\varphi _u$) for each arrow $\alpha
:u\longrightarrow  v$.
\end{itemize}
\end{definition}

For example, the problem of classifying $^{\pen}$unitary representations of the quiver \eqref{2j}
is the problem of classifying systems
\eqref{kte}
consisting of $^{\pen}$unitary spaces $\mathcal U_1,\
\mathcal U_2,\ \mathcal U_3$ and linear mappings $\mathcal A_{\alpha }$, $\mathcal A_{\beta}$, \dots, $\mathcal A_{\zeta}$.

It is customary to omit the asterisk in the terms ``$^{\Aster\!}$unitary space'' and ``$^{\Aster\!}$unitary representation''.

\section{The main theorem and its corollaries}

\subsection{The main theorem}

For each quiver $Q$, denote by $\widetilde Q$  the quiver with double the number of
arrows obtained  by attaching to $Q$ the arrows $\alpha^{\star} :v\longrightarrow  u$ for all arrows $\alpha :u\longrightarrow  v$ of $Q$.
For example, if $Q$ is the quiver \eqref{2j},
then $\widetilde Q$ is
\[{\xymatrix@=50pt{
 &{1}&\\
 {2}
 \save !<0mm,0.5mm>
 \ar@(u,l)_{\gamma}\restore
 \ar@(l,d)_{\gamma^{\star}}
 \ar@<0.4ex>[rr]^{\delta}
 \ar@<-0.4ex>[rr]_{\varepsilon}
 \ar@<0.4ex>@/^1pc/@{<-}[rr]^{\delta^{\star}}
\ar@/^1.5pc/@{<-}[ur]^{\alpha^{\star}}
 \ar@{->}[ur]^{\alpha}
 \ar@<0.4ex>@/^1pc/@{->}
 [rr];[]^{\varepsilon^{\star}} &&{3}
 \ar@/_1.5pc/@{<-}[ul]_{\beta^{\star}}
 \ar@{->}[ul]_{\beta}
 \save !<0mm,0.5mm>
 \ar@(u,r)^{\zeta}\restore
 \ar@(r,d)^{\zeta^{\star}}
 }}
\]

For each ${}^{\pen}$unitary representation $\mathcal A$ of $Q$, we define the ${}^{\pen}$unitary representation $\widetilde {\mathcal A}$ of
$\widetilde Q$ that coincides with $\mathcal A$ on $Q\subset \widetilde Q$ and that assigns to each new arrow $\alpha^{\star} :v\longrightarrow  u$ the mapping $\widetilde{\mathcal A}_{\alpha^{\star}}:=\mathcal A_{\alpha }^{\pen}:\mathcal U_v\to \mathcal U_u$, which is the \emph{adjoint} of $\mathcal A_{\alpha }:\mathcal U_u\to \mathcal U_v$ (see \eqref{gkm}).

The main result of the article is the following theorem, which is proved in Section \ref{huk}.

\begin{theorem}\label{kke}
Let\/ $\pen\in\{\T,\Aster\}$.
\begin{itemize}
  \item[\rm(a)]
Two ${}^{\pen}$unitary representations $\mathcal A$ and $\mathcal B$ of a quiver $Q$ are isometric if and only if
\begin{equation}\label{mcv}
\tr {\widetilde {\mathcal A}}(\pi)=\tr\widetilde{\mathcal B}(\pi)
\end{equation}
for each oriented cycle $\pi$ in the quiver $\widetilde Q$.

  \item[\rm(b)] It suffices to verify \eqref{mcv} for all cycles $\pi$ of length at most
\begin{equation}\label{wwk}
\varphi((r+2)(d_1+\dots+d_t)),
\end{equation}
in which $(d_1,\dots,d_t)$ is the dimension of the representations $\mathcal A$ and $\mathcal B$  $($see \eqref{e3y}$)$, $\varphi (n)$ is any bound
for the sufficient word length in Specht's criterion $($for example, $\varphi (n)$ is $n^2$ or Pappacena's bound \eqref{vrv}$)$, and $r$ is the minimal natural number such that
\begin{equation}\label{fvk}
\frac{r(r+1)}2\ge\max\{m_{ij}\,|\, \text{$i$ and $j$ are vertices of $Q$}\},
\end{equation}
in which $m_{ij}$ is the number of arrows from $j$ to $i$ in $Q$.
\end{itemize}
\end{theorem}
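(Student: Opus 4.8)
The plan is to prove the nontrivial (sufficiency) direction of (a) by reducing the isometry problem for ${}^{\pen}$unitary representations to the ${}^{\pen}$unitary similarity of a \emph{single} matrix, and then to invoke Specht's criterion \eqref{lur} in the guise appropriate to $\pen$ (unitary similarity when $\pen=\Aster$, and real or complex orthogonal similarity when $\pen=\T$, all of which are available by the variants of Specht's criterion quoted in Section~\ref{intr}). First I would dispose of necessity. If $\mathcal A$ and $\mathcal B$ are isometric via $\varphi_1,\dots,\varphi_t$, then $\varphi_v\mathcal A_\alpha=\mathcal B_\alpha\varphi_u$ for each arrow $\alpha\colon u\to v$; since each $\varphi_v$ is an isometry, $\varphi_v^{\pen}=\varphi_v^{-1}$, and taking adjoints in \eqref{gkm} gives $\mathcal B_\alpha^{\pen}=\varphi_u\mathcal A_\alpha^{\pen}\varphi_v^{-1}$ as well. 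Thus the \emph{same} family $(\varphi_v)$ realizes an isometry $\widetilde{\mathcal A}\cong\widetilde{\mathcal B}$ of representations of $\widetilde Q$ (this is exactly \eqref{mmd} for the new arrows $\alpha^\star$), and along any oriented cycle $\pi$ the intermediate isometries telescope, so $\widetilde{\mathcal B}(\pi)=\varphi_{v_1}\,\widetilde{\mathcal A}(\pi)\,\varphi_{v_1}^{-1}$ and \eqref{mcv} follows from invariance of the trace under conjugation.

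For sufficiency I would fix orthonormal bases (with respect to the defining Hermitian form if $\pen=\Aster$, and an orthonormal basis of the symmetric form if $\pen=\T$) in every $\mathcal U_v$ and $\mathcal V_v$, so that each $\mathcal A_\alpha$ becomes a matrix and each adjoint $\widetilde{\mathcal A}_{\alpha^\star}=\mathcal A_\alpha^{\pen}$ becomes its conjugate transpose (resp.\ transpose). The isometries we seek are block-diagonal ${}^{\pen}$unitary matrices $\varphi_1\oplus\cdots\oplus\varphi_t$, and the point of the reduction is to force a single ${}^{\pen}$unitary transformation of one big matrix to respect this grading. I would build a matrix $C(\mathcal A)$ on a space of dimension $(r+2)(d_1+\dots+d_t)$ consisting of $r+2$ layers, each a copy of $\bigoplus_v\mathcal U_v$: on the diagonal I place distinct self-adjoint scalar markers indexed by (vertex, layer), whose eigenspaces any ${}^{\pen}$unitary similarity must preserve, and between consecutive layers I glue the copies of each $\mathcal U_v$ by identity blocks, forcing the per-layer transformations to coincide and to produce one $\varphi_v$ per vertex.

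The up-to-$m_{ij}$ parallel arrows from $j$ to $i$ are then inserted into \emph{distinct} off-diagonal slots among the $r+2$ layers; the defining inequality \eqref{fvk}, namely $\tfrac{r(r+1)}2\ge\max m_{ij}$, is precisely what guarantees that there are enough slots to separate parallel arrows so that each $\mathcal A_\alpha$ can be recovered individually. The key claim is that this encoding is \emph{faithful}: $C(\mathcal A)$ and $C(\mathcal B)$ are ${}^{\pen}$unitarily similar if and only if $\mathcal A\cong\mathcal B$, and, more precisely, that every word $w(C,C^{\pen})$ expands along the layer structure into a combination of products $\widetilde{\mathcal A}_{\beta_1}\cdots\widetilde{\mathcal A}_{\beta_\ell}$ running over oriented cycles of $\widetilde Q$, the marker and gluing terms contributing only the dimensions $d_v$. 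Establishing this dictionary—both the faithfulness of the construction and the exact matching of words with cycles—is the step I expect to be the main obstacle, since it is where the counting behind \eqref{fvk} and the identity-gluing must be verified in detail. (Because the markers and gluing identities detect $d_v$ at every vertex carrying an arrow, I would assume, or extract from \eqref{mcv} together with necessity, that $\mathcal A$ and $\mathcal B$ share the dimension vector $(d_1,\dots,d_t)$, so that the two encodings live on spaces of equal size.)

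With faithfulness in hand, Specht's criterion applied to the single matrix $C(\mathcal A)$ of size $D=(r+2)(d_1+\dots+d_t)$ gives (a): equality \eqref{mcv} for all cycles forces equality of $\tr w(C(\mathcal A),C(\mathcal A)^{\pen})$ for all words $w$, hence ${}^{\pen}$unitary similarity of $C(\mathcal A)$ and $C(\mathcal B)$, hence $\mathcal A\cong\mathcal B$. For (b) I would track lengths through the dictionary: one letter $C$ or $C^{\pen}$ advances a word by at most one arrow of $\widetilde Q$, so a word of length $L$ produces only traces of cycles of length at most $L$. Therefore it suffices to test words of length at most any Specht bound $\varphi(D)$ for matrices of size $D$ (for instance $\varphi(n)=n^2$ or Pappacena's bound \eqref{vrv}), and this translates into testing \eqref{mcv} only for oriented cycles of length at most $\varphi\bigl((r+2)(d_1+\dots+d_t)\bigr)$, which is the bound \eqref{wwk}.
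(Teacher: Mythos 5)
Your proposal reproduces the paper's own proof essentially verbatim: your layered matrix $C(\mathcal A)$ is exactly the paper's $M_Q(x)$/$M_Q(A)$ of Definition~\ref{bit} and Lemma~\ref{njp} (diagonal scalar markers indexed by vertex and layer, identity blocks gluing consecutive layers, parallel arrows distributed over the $\tfrac{r(r+1)}2$ off-diagonal slots guaranteed by \eqref{fvk}), and you then apply Specht's criterion for $\pen=\Aster$ and Jing's orthogonal variant for $\pen=\T$ and track word lengths through the size $(r+2)(d_1+\dots+d_t)$ just as in Section~\ref{key}, with the same implicit assumption of equal dimension vectors. The faithfulness step you flag as the main obstacle is settled in the paper by equating blocks of $M_Q(A)U=UM_Q(B)$ along the block diagonals from the lower-left corner to force $U$ upper block triangular, invoking $^{\pen}$unitarity to get block diagonality, and then using the markers $\widehat D_i$ and the gluing blocks $\widehat I_d$ to force $U=\diag(U_1,\dots,U_t;\dots;U_1,\dots,U_t)$ --- note only that your eigenspace heuristic must be replaced by this direct intertwining argument, since $M_Q(A)$ itself is not self-adjoint.
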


\subsection{The main theorem in matrix form}

We say that a square complex matrix $A$ is \emph{${}^{\pen}$unitary} ($\pen\in\{\T,\Aster\}$) if $A^{\pen}A=I$. Thus, ${}^{\T}$unitary matrices are complex orthogonal matrices and
$^{\ast}$unitary matrices are unitary matrices.

A basis of a ${}^{\pen}$unitary space $\mathcal U$ is \emph{orthonormal} if the scalar product in this basis is given by the identity matrix. The change of basis matrix from an orthonormal basis to an orthonormal basis is a $^{\pen}$unitary matrix. If $[x]$ is the coordinate vector of $x\in \mathcal U$ in an orthonormal basis, then $(x,y)=[x]^{\pen\,}[y]$ for all $x,y\in \mathcal U$. If $\mathcal A:\mc U\to\mc V$ is a linear mapping between complex inner ${}^{\pen}$product spaces  and $A$ is its matrix in some orthonormal bases of  $\mathcal U$ and $\mc V$, then $ A^{\pen}$ is the matrix of the adjoint  mapping  $\mathcal A^{\pen}:\mc V\to\mc U$ (see \eqref{gkm}).

Each ${}^{\pen}$unitary representation $\mathcal A$ in \eqref{kte} can be given by
the sequence $A=(A_{\alpha
},\, A_{\beta},\, \dots,\, A_{\zeta})$ of
matrices of the linear mappings $\mathcal
A_{\alpha },\, \mathcal A_{\beta},\, \dots,\,
\mathcal A_{\zeta}$ in some orthonormal bases
of the spaces $\mathcal U_1,\
\mathcal U_2,\ \mathcal U_3$. The representation $\cal A$ in other orthonormal bases is given by the sequence
\begin{equation}\label{grp}
(U_1^{-1}A_{\alpha }U_2,\,
U_1^{-1}A_{\beta}U_3,\,
U_2^{-1}A_{\gamma  }U_2,\,
U_3^{-1}A_{\delta  }U_2,\,
U_3^{-1}A_{\varepsilon }U_2,\,
U_3^{-1}A_{\zeta}U_3)
\end{equation}
in which $U_1,\,U_2,\,U_3$ are the change of
basis matrices; they are arbitrary  ${}^{\pen}$unitary
matrices of suitable sizes.
Thus, the problem of classifying  ${}^{\pen}$unitary
representations of the quiver \eqref{2j}
reduces to the problem of classifying matrix
sequences  $(A_{\alpha },\, \dots,\, A_{\zeta})$
up to transformations of the form \eqref{grp}.
This example leads to the following definition.
\begin{definition}\label{hrs}
Let $\pen\in\{\T,\Aster\}$ and let $Q$ be a quiver with vertices $1,\dots,t$.
\begin{itemize}
  \item A \emph{complex matrix
      representation $A$ of
      dimension $(d_1,\dots,d_t)$}
      of $Q$ is given by assigning
      to each
      arrow
      $\alpha:u\longrightarrow v$
      a complex matrix ${A}_{\alpha}$ of size $d_v\times d_u$ (we take $d_i:=0$ if the vertex $i$ does not have arrows).

  \item Two complex matrix representations
      $A$ and $B$ of $Q$ are \emph{${}^{\pen\!}$unitarily isometric} if there
      exist ${}^{\pen}$unitary matrices
      $U_1,\dots, U_t$ such that
\begin{equation}\label{ljt}
B_{\alpha}=
    U^{-1}_v{A}_{\alpha}{U}_u\qquad
\hbox{for every arrow $\alpha:
u\longrightarrow v$}.
\end{equation}
\end{itemize}
\end{definition}

For example, two complex matrix representations
\[
\xymatrix{
 &{d_1}&\\
 {d_2}
  \save !<-2mm,0cm>\ar@(ul,dl)@{->}_{ A_{\gamma}}\restore
 \ar@{->}[ur]^{A_{\alpha}}
\ar@<0.4ex>[rr]^{A_{\delta}}
 \ar@<-0.4ex>[rr]_{A_{\varepsilon}} &&{d_3}
 \ar[ul]_{A_{\beta}}
 \save !<2mm,0cm> \ar@(ur,dr)^{ A_{\zeta}}\restore
 }\qquad
\xymatrix{
 &{d_1}&\\
 {d_2}
  \save !<-2mm,0cm>\ar@(ul,dl)@{->}_{ B_{\gamma}}\restore
 \ar@{->}[ur]^{B_{\alpha}}
\ar@<0.4ex>[rr]^{B_{\delta}}
 \ar@<-0.4ex>[rr]_{B_{\varepsilon}} &&{d_3}
 \ar[ul]_{B_{\beta}}
 \save !<2mm,0cm> \ar@(ur,dr)^{ B_{\zeta}}\restore
 }
\]
of the quiver \eqref{2j} are $^{\pen}$unitarily isometric if and only if $B=(B_{\alpha },\dots,B_{\zeta})
$ is of the form \eqref{grp}.

The principle formalized in the following obvious lemma reduces the problem of classifying ${}^{\pen}$unitary representations up to isometry to the problem of classifying complex matrix representations up to $^{\pen}$unitary isometry.

\begin{lemma}\label{vfo}
Let $\cal A$ and $\cal B$ be two ${}^{\pen}$unitary representations of a quiver.
Choosing orthonormal bases in their spaces, we get two complex matrix representations $A$ and $B$. Then
$\cal A$ and $\cal B$ are isometric if and only if $A$ and $B$ are $^{\pen}$unitarily isometric.
\end{lemma}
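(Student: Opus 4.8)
The plan is to reduce both notions in the statement to the same system of matrix equations by identifying isometries between spaces with $^{\pen}$unitary matrices. First I would record the coordinate criterion for a single isometry. Fix orthonormal bases of $\mathcal U$ and $\mathcal V$ and let $P$ be the matrix of a linear map $\varphi:\mathcal U\to\mathcal V$, so that $[\varphi x]=P[x]$ for all $x$. Using the identity $(x,y)=[x]^{\pen}[y]$ valid in an orthonormal basis, one computes $(\varphi x,\varphi y)=[x]^{\pen}P^{\pen}P[y]$, whence $\varphi$ preserves scalar products for all $x,y$ if and only if $P^{\pen}P=I$. A square matrix with $P^{\pen}P=I$ is invertible, so $\varphi$ is then automatically bijective; thus $\varphi$ is an isometry precisely when $P$ is $^{\pen}$unitary. (If $\dim\mathcal U_v\neq\dim\mathcal V_v$ for some $v$, then neither notion can hold, since the matrices $A_{\alpha}$ and $B_{\alpha}$ have incompatible sizes and no isometry $\mathcal U_v\to\mathcal V_v$ exists; so we may assume the dimension vectors of $A$ and $B$ agree.)

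Next I would translate the commutativity condition \eqref{mmd} into matrix form. With the bases fixed, the matrices of $\mathcal A_{\alpha}$ and $\mathcal B_{\alpha}$ are $A_{\alpha}$ and $B_{\alpha}$. If each isometry $\varphi_v$ has matrix $U_v$, then, since the matrix of a composite of linear maps is the product of their matrices, the two paths in \eqref{mmd} have matrices $U_vA_{\alpha}$ and $B_{\alpha}U_u$. Hence a family $(\varphi_v)$ of isometries realizes an isometry of $\mathcal A$ and $\mathcal B$ if and only if every $U_v$ is $^{\pen}$unitary and
\[
U_vA_{\alpha}=B_{\alpha}U_u\qquad\text{for every arrow }\alpha:u\longrightarrow v .
\]

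Finally I would reconcile this with \eqref{ljt}. The displayed equation is equivalent to $B_{\alpha}=U_vA_{\alpha}U_u^{-1}$. Setting $W_v:=U_v^{-1}$, which is again $^{\pen}$unitary because $(U^{-1})^{\pen}U^{-1}=(U^{\pen})^{\pen}U^{\pen}=UU^{\pen}=UU^{-1}=I$ (using that $\pen$ is an involution and $U^{-1}=U^{\pen}$ for a $^{\pen}$unitary $U$), we obtain $B_{\alpha}=W_v^{-1}A_{\alpha}W_u$, which is exactly \eqref{ljt}. Conversely, given $^{\pen}$unitary matrices $W_v$ satisfying \eqref{ljt}, the linear maps $\varphi_v$ whose matrices are $W_v^{-1}$ are isometries satisfying the commutativity condition. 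Since $v\mapsto U_v$ and $v\mapsto W_v=U_v^{-1}$ is a bijection between families of $^{\pen}$unitary matrices, the two conditions are equivalent, proving Lemma~\ref{vfo}. I do not expect a genuine obstacle here: the whole content is the coordinate criterion for a single isometry, and the only point requiring care is the bookkeeping of the inversion convention, so that the two-sided commutativity equation $U_vA_{\alpha}=B_{\alpha}U_u$ matches the one-sided normal form \eqref{ljt}.
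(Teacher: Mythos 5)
Your proposal is correct and is essentially the argument the paper has in mind: the paper labels Lemma~\ref{vfo} ``obvious'' and gives no proof, relying on exactly the facts you spell out --- that in orthonormal bases $(x,y)=[x]^{\pen}[y]$, so a linear map is an isometry precisely when its matrix $P$ satisfies $P^{\pen}P=I$, and that the commutative diagram \eqref{mmd} becomes $U_vA_{\alpha}=B_{\alpha}U_u$ in coordinates. Your careful handling of the inversion bookkeeping (passing from $B_{\alpha}=U_vA_{\alpha}U_u^{-1}$ to the normal form \eqref{ljt} via $W_v:=U_v^{-1}$, which is again $^{\pen}$unitary) is exactly the routine verification being left to the reader, so there is nothing to correct.
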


For each oriented cycle \eqref{iyw} in a quiver $Q$ and each complex matrix representation $A$
of $Q$, we write $ A(\pi):=A_{\alpha_1} A_{\alpha_2}\cdots A_{\alpha_{\ell}}.$
The following theorem is equivalent to Theorem \ref{kke} due to Lemma \ref{vfo}.

\begin{theorem}\label{kju}
Let\/ $\pen\in\{\T,\Aster\}$.
\begin{itemize}
  \item[\rm(a)]
Two complex matrix representations $A$ and $B$ of a quiver $Q$ are $^{\pen}$unitarily isometric if and only if
\begin{equation}\label{q1i}
\tr {\widetilde A}(\pi)=\tr{\widetilde B}(\pi)
\end{equation}
for each oriented cycle $\pi$ in the quiver $\widetilde Q$.

\item[\rm(b)]
It suffices to verify \eqref{q1i} for all cycles $\pi$ of length at most \eqref{wwk}.
\end{itemize}
\end{theorem}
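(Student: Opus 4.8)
The plan is to deduce Theorem \ref{kju} from Specht's criterion (and, for $\pen=\T$, from its complex-orthogonal analogue established by Jing) applied to a single matrix manufactured from the representation. The necessity of \eqref{q1i} is the easy direction and needs no deep input: if $A$ and $B$ are $^{\pen}$unitarily isometric via $^{\pen}$unitary matrices $U_1,\dots,U_t$, then from $B_\alpha=U_v^{-1}A_\alpha U_u$ and $U_i^{\pen}=U_i^{-1}$ one checks that the adjoint arrows transform correctly, $B_\alpha^{\pen}=U_u^{-1}A_\alpha^{\pen}U_v$, so that $\widetilde B$ is obtained from $\widetilde A$ by the single block-diagonal change of basis $\diag(U_1,\dots,U_t)$ acting on $\widetilde Q$; telescoping these factors around any closed walk in $\widetilde Q$ cancels all the $U_i$ and yields $\tr\widetilde A(\pi)=\tr\widetilde B(\pi)$.

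For sufficiency I would first pass to the total space $\mathbb C^n=\mathbb C^{d_1}\oplus\cdots\oplus\mathbb C^{d_t}$, $n=d_1+\cdots+d_t$, replacing each $A_\alpha$ with $\alpha:u\to v$ by the $n\times n$ matrix $\hat A_\alpha=P_v\hat A_\alpha P_u$ that places $A_\alpha$ in the $(v,u)$ block, where $P_1,\dots,P_t$ are the coordinate projections; then $A$ and $B$ are isometric exactly when a single $^{\pen}$unitary $U$ commuting with every $P_i$ (hence block-diagonal) carries $(\hat A_\alpha)$ to $(\hat B_\alpha)$. The heart of the proof is to repackage this constrained problem as unconstrained $^{\pen}$unitary similarity of one matrix, so that Specht's criterion applies. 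I would build a single matrix $M_A$ on $\mathbb C^{r+2}\otimes\mathbb C^n$, of size $(r+2)n$, whose $(r+2)$-dimensional auxiliary factor carries fixed real (hence simultaneously symmetric and Hermitian) markers of two kinds: a vertex marker assigning distinct scalars to the summands $\mathbb C^{d_i}$, forcing any similarity to preserve the vertex grading, and an injective labelling of the at most $m_{vu}$ parallel arrows $u\to v$ by distinct pairs of auxiliary directions, with the remaining auxiliary directions used for structural (vertex and source--target) bookkeeping. Inequality \eqref{fvk}, $\tfrac{r(r+1)}2\ge\max_{i,j}m_{ij}$, is precisely the condition guaranteeing that $r$ directions supply enough such pairs to label all parallel arrows injectively. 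Because the markers are real, $M_A^{\pen}$ reinstates the adjoint blocks $A_\alpha^{\pen}$, i.e. the arrows $\alpha^\star$ of $\widetilde Q$. I would then check that the markers rigidly force any $^{\pen}$unitary $U$ with $U^{-1}M_AU=M_B$ to respect both the auxiliary and the vertex grading, so that $U$ splits as the desired block-diagonal family of isometries and matches the arrows slot by slot; this makes $^{\pen}$unitary similarity of $M_A$ and $M_B$ equivalent to isometry of $A$ and $B$.

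It remains to convert traces of words into traces of cycles. Since $\hat A_\alpha=P_v\hat A_\alpha P_u$ and the arrow and vertex markers occupy distinct, non-overlapping positions, every product of $M_A$ and $M_A^{\pen}$ expands into terms each of which, after absorbing the marker factors, is a product $\widetilde A_{\alpha_1}\cdots\widetilde A_{\alpha_\ell}$ indexed by a closed walk in $\widetilde Q$ of length at most that of the word; hence $\tr w(M_A,M_A^{\pen})$ is a fixed linear combination of cycle traces $\tr\widetilde A(\pi)$ with $\operatorname{length}(\pi)\le\operatorname{length}(w)$, and the same combination for $B$. Consequently, equality of all cycle traces in \eqref{q1i} forces equality of all word traces, whence $M_A$ and $M_B$ are $^{\pen}$unitarily similar by Specht's criterion, and so $A$ and $B$ are isometric; this proves part (a). For part (b), Specht's criterion for the size-$(r+2)n$ matrices $M_A,M_B$ needs only words of length at most $\varphi((r+2)n)$, and such words involve only cycles of length at most $\varphi((r+2)n)$, so verifying \eqref{q1i} up to the bound \eqref{wwk} already forces similarity.

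The step I expect to be the main obstacle is the explicit construction of $M_A$: choosing markers that simultaneously (i) force every $^{\pen}$unitary similarity to be block-diagonal in the vertex grading, (ii) separate all parallel arrows within the minimal auxiliary dimension dictated by \eqref{fvk}, and (iii) keep the word--cycle correspondence tight enough that Specht's word-length bound translates without inflation into the cycle-length bound \eqref{wwk}. A further technical point is to run the construction uniformly for $\pen=\Aster$ and $\pen=\T$, which is exactly what dictates choosing the markers real symmetric and invoking the complex-orthogonal form of Specht's criterion in the transpose case.
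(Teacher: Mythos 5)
Your proposal is correct and is essentially the paper's own proof: the marker matrix you describe is realized exactly by the paper's $M_Q(x)$ in \eqref{rwt} (distinct real scalars $1,\dots,(r+2)t$ on the diagonal as vertex markers, identity blocks on the superdiagonal as the structural glue forcing the diagonal blocks of $U$ to coincide, and the $r(r+1)/2$ upper slots $X_1,\dots,X_{r(r+1)/2}$ labelling parallel arrows, which is precisely where \eqref{fvk} enters), with the equivalence \eqref{jdfb} established in Lemma \ref{njp} by the same block-triangularity--plus--$^{\pen}$unitarity rigidity argument you sketch. The remainder also matches: the paper expands $\tr w(M_Q(A),M_Q(A)^{\pen})$ into a universal linear combination of cycle traces of length at most the word length, invokes Specht for $\pen=\Aster$ and Jing for $\pen=\T$ (the real markers making both cases uniform), and obtains \eqref{wwk} from the size count $(r+2)(d_1+\dots+d_t)$ exactly as in your part (b).
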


\subsection{Corollaries}

A \emph{Euclidean representation} $\mathcal A=(\mathcal A_{\alpha },\mathcal U_v)$ of a quiver $Q$ is defined in \cite{ser_unit} as a list of Euclidean spaces $\mathcal U_v$ assigned to all vertices $v$ and linear mappings $\mathcal A_{\alpha}: \mathcal U_u \to \mathcal U_v$ assigned to all arrows $\alpha : u \longrightarrow  v$. Two Euclidean representations $\mathcal A=(\mathcal A_{\alpha },\mathcal U_v)$ and $\mathcal B=(\mathcal B_{\alpha },\mathcal V_v)$ of $Q$ are \emph{isometric} if
there exists a family of isometries
$\varphi_1:\mathcal U_1\to \mathcal V_1,\dots,
\varphi_t:\mathcal U_t\to \mathcal V_t$ such that the diagram
\eqref{mmd}
is commutative for each arrow $\alpha
:u\longrightarrow  v$.

We say that a matrix representation $A$ of $Q$
is \emph{real} if all its matrices are real.
Two real matrix representations $A$ and $B$ are
\emph{real orthogonally isometric} if there exist real
orthogonal matrices $U_1,\dots, U_t$ such
that \eqref{ljt} holds.

\begin{corollary}\label{jmo}
\begin{itemize}
  \item[\rm(a)]
Two Euclidian representations $\mathcal A$ and $\mathcal B$ of a quiver $Q$ are isometric if and only if $\tr {\widetilde{\mathcal A}}(\pi)=\tr{\widetilde{\mathcal B}}(\pi)$ for each oriented cycle $\pi$ in the quiver $\widetilde Q$.

  \item[\rm(b)]
Two real matrix representations $A$ and $B$ of a quiver $Q$ are isometric if and only if $\tr {\widetilde A}(\pi)=\tr{\widetilde B}(\pi)$ for each oriented cycle $\pi$ in the quiver $\widetilde Q$.

  \item[\rm(c)] It suffices to verify the equalities in {\rm(a)} and {\rm(b)} for all cycles $\pi$ of length at most \eqref{wwk}.

\end{itemize}
\end{corollary}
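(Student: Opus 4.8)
The plan is to deduce all three parts from the complex Theorem~\ref{kju} (equivalently Theorem~\ref{kke}), exploiting the fact that for a \emph{real} matrix $A_\alpha$ one has $A_\alpha^{\T}=A_\alpha^{\Aster}$, so that the ``transpose'' data appearing in the real statements coincides with the $^{\Aster}$unitary data of the complex theorem. First I would link parts (a) and (b): choosing orthonormal bases in the Euclidean spaces of $\mc A$ and $\mc B$ turns them into real matrix representations, sends each adjoint $\mc A_\alpha^{\T}$ to the transpose of its matrix, and sends isometries of Euclidean spaces to real orthogonal change-of-basis matrices. This is the Euclidean analogue of Lemma~\ref{vfo}, and it shows that (a) is the coordinate-free form of (b); hence it suffices to prove (b), after which (a) follows and (c) is inherited from Theorem~\ref{kju}(b).

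For (b), view the real representations $A$ and $B$ as complex ones. Because every entry is real, the doubled representation $\widetilde A$ built with transposes is literally the $^{\Aster}$unitary doubling of $A$, so the conditions $\tr\widetilde A(\pi)=\tr\widetilde B(\pi)$ of (b) are identical to the conditions \eqref{q1i} of Theorem~\ref{kju} with $\pen=\Aster$. The forward implication is then immediate: real orthogonal matrices are in particular unitary, so real orthogonal isometry forces $^{\Aster}$unitary isometry, and the traces agree by the easy direction of Theorem~\ref{kju}. Conversely, if all traces agree, Theorem~\ref{kju}(a) with $\pen=\Aster$ produces complex unitary matrices $U_1,\dots,U_t$ realizing \eqref{ljt}. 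Everything thus reduces to the following descent: \emph{two real matrix representations that are $^{\Aster}$unitarily isometric are already real orthogonally isometric.}

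The descent is the main obstacle, and it is where reality is used essentially: real \emph{similarity} does not imply real \emph{orthogonal} similarity in general, so the conjugators must be built from unitarity, not merely from invertibility. I would establish it by repeating the proof of Theorem~\ref{kju} (Section~\ref{huk}) over $\mathbb R$. As indicated in the introduction, that proof reduces the quiver isometry problem to simultaneous similarity (and, where the two ends of an arrow carry different spaces, equivalence) of matrices, to which Specht's and Wiegmann's (and Jing's) criteria are applied; by Pearcy~\cite{pea} and Jing~\cite{jin} these criteria remain valid for real matrices with real orthogonal similarity (equivalence) in place of unitary similarity (equivalence). Since the trace conditions feeding the real criteria are exactly those already verified, their real counterparts return real orthogonal conjugators, which is the descent and hence the backward direction of (b); the single-vertex, single-arrow instance is precisely Pearcy's observation. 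A self-contained alternative is to complexify the real space of intertwiners $\{(X_i)\ \text{real}:\ X_vB_\alpha=A_\alpha X_u\}$, which contains $(U_i)$ after taking real and imaginary parts, and to extract a real orthogonal family by a polar-decomposition and averaging argument; the difficulty is again concentrated in making the extracted family orthogonal rather than merely invertible.

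Finally, part (c) needs no extra work: the cycles $\pi$ and their associated words are the same objects over $\mathbb R$ as over $\mathbb C$, and the real versions of Specht's criterion carry the same sufficient word-length bounds, so the bound \eqref{wwk} of Theorem~\ref{kju}(b) applies verbatim to the equalities in (a) and (b).
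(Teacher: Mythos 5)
Your proposal is correct, but it handles the crucial descent step by a genuinely different route than the paper. The paper's proof of Corollary \ref{jmo} is short: part (a) is reduced to part (b) by choosing orthonormal bases (exactly as you do), and part (b) is obtained from Theorem \ref{kju} with $\pen=\Aster$ together with a \emph{cited} result, \cite[Theorem 4.1(a)]{ser_unit}, stating that two real matrix representations of a quiver are real orthogonally isometric if and only if they are unitarily isometric --- the quiver-level generalization of the classical Kaplansky/Horn--Johnson fact for simultaneous similarity, which the paper quotes in parentheses. You instead prove the needed descent yourself by re-running the machinery of Section \ref{huk} over $\mathbb R$: the matrices $M_Q(A)$ and $M_Q(B)$ are real when $A$ and $B$ are, the trace conditions on cycles in $\widetilde Q$ expand to the Specht trace conditions for $M_Q(A)$ and $M_Q(B)$ exactly as in Section \ref{key} (with transpose equal to conjugate transpose on real matrices), Pearcy's real form of Specht's criterion yields a real orthogonal $U$ with $M_Q(A)U=UM_Q(B)$, and the block analysis in the proof of Lemma \ref{njp} works verbatim for a real orthogonal $U$ (upper block triangular together with $U^{\T}U=I$ forces block diagonal, the $\widehat D_i$ force the finer splitting, the $\widehat I_d$ positions force equality of the diagonal parts), extracting real orthogonal $U_1,\dots,U_t$. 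This makes the corollary self-contained modulo the real Specht criterion already recalled in the introduction, where the paper's citation is shorter but leans on an external theorem; your route in effect reproves the special case of \cite[Theorem 4.1(a)]{ser_unit} that is needed. Two small inaccuracies, neither fatal: the proof of Theorem \ref{kju} reduces everything to a \emph{single} matrix under $^{\pen}$unitary similarity via $M_Q$, not to simultaneous similarity or equivalence, so only the real Specht criterion (not Wiegmann's or Jing's) is needed for your descent; and in (c) you assert without justification that the real version of Specht's criterion carries the same sufficient word-length bounds --- you do not need this claim, since one can verify the equalities up to the bound \eqref{wwk}, invoke Theorem \ref{kju}(b) with $\pen=\Aster$ (the conditions are literally the complex ones because the matrices are real), and then apply the unbounded descent to pass to real orthogonal conjugators, which is how the paper obtains (c).
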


\begin{proof}
Let $\cal A$ and $\cal B$ be two Euclidean
representations of a quiver.
Choosing orthonormal bases in their spaces, we
get two real matrix representations $A$ and
$B$. Then $\cal A$ and $\cal B$ are
isometric if and only if $A$ and $B$ are
orthogonally isometric, and so (a) follows from (b). The statement (b) follows from Theorem \ref{kju} due to
the following statement proved in \cite[Theorem
4.1(a)]{ser_unit}:
\begin{quote}
two real matrix representations of a quiver are real
orthogonally isometric if and only if they are
unitarily isometric.
\end{quote}
(In particular, two lists of real matrices are simultaneously real
orthogonally similar if and only if they are simultaneously
unitarily similar; see \cite[Theorem
65]{kap} or \cite[Theorem 2.5.21]{h-j}.)
\end{proof}

\begin{corollary}\label{jrf}
Applying Theorem \ref{kju} and Corollary \ref{jmo} to complex and real matrix  representations of the quivers

\[
\xymatrix{
*{\ci}
\ar@(ur,dr)}
\qquad     \qquad \qquad      \qquad
\xymatrix{
{\ci}
\ar@(u,r)
\save !<-1pt,0.5pt>
 \ar@(l,u)\restore
 \ar@(d,l)
 \save[]+<2ex,-0.8ex>*
{\cdot}
\restore
\save[]+<1.6ex,-1.7ex>*
{\cdot}
\restore
\save[]+<0.7ex,-2ex>*
{\cdot}
\restore
 }
     \qquad \qquad      \qquad
\xymatrix{
*{\ci}
  \ar@/^1.2pc/[rr]
\ar@/^0.6pc/[rr]
  \ar@/_1.2pc/[rr]
  &\save[]+<0ex,-0.3pc>*
{\vdots}
\restore&
*{\ci}}
\]
we get Specht's, Wiegmann's, and Jing's criteria {\rm(see the beginning of Section \ref{intr}).}
\end{corollary}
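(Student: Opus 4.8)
The plan is to handle the three quivers one at a time. In each case I would first unwind Definition~\ref{hrs} to see what a complex matrix representation is and what the relation \eqref{ljt} of $^{\pen}$unitary isometry becomes, then describe the enlarged quiver $\widetilde Q$ together with $\widetilde A$, and finally match the oriented cycles of $\widetilde Q$ against the words appearing in the corresponding published criterion, so that \eqref{q1i} of Theorem~\ref{kju}(a) turns into the stated trace condition. Throughout, $\pen=\Aster$ gives the complex unitary versions and $\pen=\T$ the complex orthogonal versions; replacing Theorem~\ref{kju}(a) by Corollary~\ref{jmo}(b) gives the real orthogonal versions, which reproduces exactly the list of variants recorded after \eqref{vfd}.

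For the one-vertex, one-loop quiver, a representation is a single $n\times n$ matrix $A:=A_\alpha$ and \eqref{ljt} reads $B=U^{-1}AU$, i.e.\ $^{\pen}$unitary similarity. The graph $\widetilde Q$ carries the two loops $\alpha,\alpha^{\star}$ with $\widetilde A_\alpha=A$ and $\widetilde A_{\alpha^{\star}}=A^{\pen}$, an oriented cycle is merely a finite sequence of these loops, and $\widetilde A(\pi)$ runs over all words $w(A,A^{\pen})$; hence \eqref{q1i} is precisely Specht's condition \eqref{lur}. The one-vertex, $k$-loop quiver is identical except that a representation is a tuple $(A_1,\dots,A_k)$, \eqref{ljt} becomes simultaneous $^{\pen}$unitary similarity $B_i=U^{-1}A_iU$, and $\widetilde A(\pi)$ runs over all words in $A_1,A_1^{\pen},\dots,A_k,A_k^{\pen}$, which is Wiegmann's condition \eqref{hsr}.

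The two-vertex quiver with $k$ parallel arrows $\alpha_1,\dots,\alpha_k\colon 1\to 2$ is the only case needing real combinatorics. Here a representation is a tuple $(A_1,\dots,A_k)$ of $m\times n$ matrices with $n=d_1$ and $m=d_2$, and \eqref{ljt} reads $B_i=U_2^{-1}A_iU_1$; writing $U:=U_2^{-1}$ and $V:=U_1$ this is simultaneous $^{\pen}$unitary equivalence $B_i=UA_iV$. In $\widetilde Q$ the arrows are $\alpha_i\colon 1\to 2$ carrying $A_i$ and $\alpha_i^{\star}\colon 2\to 1$ carrying $A_i^{\pen}$; since vertex~$1$ has only outgoing arrows to~$2$ and vertex~$2$ only outgoing arrows to~$1$, every closed walk strictly alternates and has even length. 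The crux is to observe that a cycle based at vertex~$1$ gives the product $A_{i_1}^{\pen}A_{j_1}A_{i_2}^{\pen}A_{j_2}\cdots A_{i_p}^{\pen}A_{j_p}$, which is exactly the word $w(X_{i_1 j_1},\dots,X_{i_p j_p})$ in the $k^2$ square matrices $X_{ij}:=A_i^{\pen}A_j$, and that conversely every word in the $X_{ij}$ is realized by such a cycle. Thus \eqref{q1i} becomes Jing's condition \eqref{vfd}.

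I expect this last identification to be the main obstacle. One must verify that each adjacent pair ``$\alpha^{\star}_i$ then $\alpha_j$'' met while traversing the walk groups into a genuine $n\times n$ block $A_i^{\pen}A_j$, that the map from cycles to words $w(x_{11},\dots,x_{kk})$ is onto so that no trace equality of \eqref{vfd} is missed and none is spurious, and that the freedom in the choice of base vertex is harmlessly absorbed by the cyclic invariance of the trace noted after \eqref{iyw}. The remaining work---checking that the fixed common dimension $(d_1,\dots,d_t)$ accounts for the empty-word equalities $\tr I=\dim$ tacit in each criterion---is routine.
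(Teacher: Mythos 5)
Your proposal is correct and matches the paper's intent exactly: the paper states Corollary~\ref{jrf} without proof, treating it as an immediate application of Theorem~\ref{kju} and Corollary~\ref{jmo}, and your unwinding supplies precisely the intended routine verification --- loops in $\widetilde Q$ giving words in $A,A^{\pen}$ (Specht) and in $A_1,A_1^{\pen},\dots,A_k,A_k^{\pen}$ (Wiegmann), and the forced alternation of $\alpha_i^{\star}$ and $\alpha_j$ in closed walks of the two-vertex quiver grouping into words in the $k^2$ blocks $A_i^{\pen}A_j$ (Jing), with cyclic invariance of the trace disposing of the base-vertex ambiguity. Your handling of the side points (passing from $B_i=U_2^{-1}A_iU_1$ to $B_i=UA_iV$, the $\pen\in\{\T,\Aster\}$ dichotomy for the unitary versus complex orthogonal variants, and Corollary~\ref{jmo}(b) for the real orthogonal ones) is likewise exactly what the paper relies on.
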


Applying Theorem \ref{kju} and Corollary \ref{jmo} to complex and real matrix  representations of the quiver\\[-3ex]
\[
\xymatrix@R=-3pt@C=2cm
{
&1\\&\\&2\\
*{\ci}\ar@/^/@{->}[ruuu]
\ar@{->}[ru]
\ar@/_/@{->}[rdd]
&\vdots
\\&\\&k\\
}
\]
with $k\ge 1$, we obtain the following criterion.

\begin{corollary}\label{jpt}
\begin{itemize}
  \item[\rm(a)]  Let $A_1,\dots,A_k$ and $B_1,\dots,B_k$ $(k\ge 1)$
be complex matrices with $m$ rows. Suppose that $A_i$ and $B_i$ have $n_i$ columns, $i=1,2,\dots,k$. Then there exist unitary matrices $U,V_1,\dots,V_k$ such that
\[
(UA_1V_1,\dots,UA_kV_k)=(B_1,\dots,B_k)
\]
if and only if
\begin{equation}\label{vv1}
\tr w(A_1^{\ast}A_1,\dots,A_k^{\ast}A_k) = \tr w(B_1^{\ast}B_1,\dots,B_k^{\ast}B_k)
\end{equation}
for every word $w(x_1,\dots,x_k)$ in noncommuting variables.

  \item[\rm(b)]
Let $A_1,\dots,A_k$ and $B_1,\dots,B_k$ $(k\ge 1)$
be real (respectively, complex) matrices with $m$ rows.  Suppose that $A_i$ and $B_i$ have $n_i$ columns, $i=1,2,\dots,k$. Then there exist real orthogonal (respectively, complex orthogonal)  matrices $U,V_1,\dots,V_k$ such that
\[
(UA_1V_1,\dots,UA_kV_k)=(B_1,\dots,B_k)
\]
if and only if
\begin{equation}\label{vv2}
\tr w(A_1^{\T}A_1,\dots,A_k^{\T}A_k) = \tr w(B_1^{\T}B_1,\dots,B_k^{\T}B_k)
\end{equation}
for every word $w(x_1,\dots,x_k)$ in noncommuting variables.

  \item[\rm(c)] It suffices to verify \eqref{vv1} and \eqref{vv2} for all words of length at most $\varphi (3(m+n_1+\dots+n_k))$, in which $\varphi (n)$ is any bound
for the sufficient word length in Specht's criterion $($see Theorem {\rm\ref{kke}(b))}.

\end{itemize}
\end{corollary}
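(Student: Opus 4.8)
The plan is to obtain Corollary~\ref{jpt} as a direct specialization of Theorem~\ref{kju} and Corollary~\ref{jmo} to the displayed star quiver $Q$ with one central vertex and $k$ leaves. First I would fix the dimension vector by assigning the central vertex the dimension $m$ and the $i$th leaf the dimension $n_i$, and orient each of the $k$ arrows from the leaf $i$ into the centre, so that the arrow $\alpha_i$ carries the $m\times n_i$ matrix $A_i$ in accordance with \eqref{ljt}. With this choice the $^{\pen}$unitary isometry relation \eqref{ljt} reads $B_i=U_0^{-1}A_iU_i$, where $U_0$ is the $^{\pen}$unitary change of basis at the centre and $U_i$ at the $i$th leaf; writing $U:=U_0^{-1}$ and $V_i:=U_i$ turns this into $B_i=UA_iV_i$ with $U$ common to all $i$ and the $V_i$ independent. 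Thus for $\pen=\Aster$ the problem is exactly the one in part~(a), and for $\pen=\T$ it is the complex orthogonal problem in part~(b); the real orthogonal case of part~(b) is handled identically through Corollary~\ref{jmo}(b).

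The \emph{heart} of the argument is to read off the oriented cycles of $\widetilde Q$. Since $Q$ has a single arrow $\alpha_i$ between the centre and each leaf $i$, the graph $\widetilde Q$ has, between these two vertices, only $\alpha_i$ and its adjoint $\alpha_i^{\star}$, and there are no arrows among the leaves. Consequently every oriented cycle in $\widetilde Q$ alternates centre--leaf--centre, and once it reaches a leaf $i$ it is forced to return along $\alpha_i$. I would show that each such excursion contributes the factor $\widetilde A_{\alpha_i^{\star}}\widetilde A_{\alpha_i}=A_iA_i^{\pen}$ at the centre, so that $\tr\widetilde A(\pi)=\tr\,w(A_1A_1^{\pen},\dots,A_kA_k^{\pen})$ for the word $w$ recording the sequence of leaves visited, and conversely that every word in these $k$ letters is realized by a cycle; these are the traces appearing in \eqref{vv1} and \eqref{vv2}. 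Comparing with $\widetilde B$ then converts \eqref{q1i} into precisely those identities. This step is the main obstacle: it is the presence of a single arrow at each leaf (rather than the $k$ parallel arrows of Jing's two-vertex quiver) that forces the two adjacent factors of every excursion to share the same index, producing words in the $k$ matrices $A_iA_i^{\pen}$ instead of the $k^2$ mixed products $A_i^{\pen}A_j$.

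Finally I would supply the length bound of part~(c) from Theorem~\ref{kke}(b). The star quiver has at most one arrow between any ordered pair of vertices, so $m_{ij}\le 1$ and \eqref{fvk} is already satisfied by $r=1$, whence $r+2=3$. The dimension vector sums to $m+n_1+\dots+n_k$, so \eqref{wwk} becomes $\varphi(3(m+n_1+\dots+n_k))$, bounding the length of the cycles that need to be tested. Since a word of length $\ell$ in the $A_iA_i^{\pen}$ corresponds to a cycle of length $2\ell$ in $\widetilde Q$, it suffices to test words of length at most half this bound, and \emph{a fortiori} all words of length at most $\varphi(3(m+n_1+\dots+n_k))$, which is the assertion of part~(c).
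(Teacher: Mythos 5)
Your proposal is correct and it is exactly the paper's (essentially unwritten) proof: the paper justifies Corollary~\ref{jpt} only by the sentence preceding it, ``Applying Theorem~\ref{kju} and Corollary~\ref{jmo} to complex and real matrix representations of the quiver'' pictured there, and you supply precisely the intended details --- the translation of \eqref{ljt} into $B_i=UA_iV_i$ (your orientation of the arrows from the leaves into the centre is the reading forced by the sizes $d_v\times d_u$ in Definition~\ref{hrs}, even though the picture draws the arrows outward; the two orientations give the same equivalence), the decomposition of every oriented cycle of $\widetilde Q$ into centre--leaf excursions, and the computation $r=1$ in \eqref{fvk}, whence \eqref{wwk} becomes $\varphi(3(m+n_1+\dots+n_k))$, with the correct remark that the factor of two between word length and cycle length only makes the stated bound generous.

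Two caveats. First, a composition-order slip: in the paper's trace convention the excursion through leaf $i$ contributes $\widetilde A_{\alpha_i}\widetilde A_{\alpha_i^{\star}}=A_iA_i^{\pen}$, an $m\times m$ matrix, whereas the expression you wrote, $\widetilde A_{\alpha_i^{\star}}\widetilde A_{\alpha_i}$, equals $A_i^{\pen}A_i$ of size $n_i\times n_i$; your ensuing formula $\tr\widetilde A(\pi)=\tr w(A_1A_1^{\pen},\dots,A_kA_k^{\pen})$ is nevertheless the right one. Second, your closing claim that these ``are the traces appearing in \eqref{vv1} and \eqref{vv2}'' needs a comment you did not make: as printed, \eqref{vv1} uses the letters $A_i^{\pen}A_i$, whose mixed products are not even defined when the $n_i$ differ; and even when all $n_i$ coincide, \eqref{vv1} read literally is false --- take $A_1=A_2=\diag(1,0)$, $U=V_1=I$, $V_2$ the $2\times 2$ permutation matrix interchanging the coordinates, and $B_i=UA_iV_i$; then $\tr(A_1^{\ast}A_1A_2^{\ast}A_2)=1$ while $\tr(B_1^{\ast}B_1B_2^{\ast}B_2)=0$. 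Indeed $B_iB_i^{\pen}=UA_iA_i^{\pen}U^{-1}$ is conjugated by the common $U$, whereas $B_i^{\pen}B_i=V_i^{-1}A_i^{\pen}A_iV_i$ involves the independent $V_i$, so the invariant family --- the one your derivation from Theorem~\ref{kju} actually produces --- consists of the words in the $m\times m$ matrices $A_iA_i^{\pen}$, and \eqref{vv1}--\eqref{vv2} must be read with $A_iA_i^{\pen}$ in place of $A_i^{\pen}A_i$. This is a defect of the printed statement rather than of your argument; with that reading your proof is complete and coincides with the paper's.
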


Denote by $Q_t$ the \emph{complete quiver} with vertices $1, 2, \dots,t$; that is, the quiver in which each vertex has exactly one loop and
every pair of distinct vertices is connected by a pair of arrows (one in each direction). For example,

\[
\raisebox{-3ex}
{$Q_1:$ \xymatrix{
*{\ci}
\ar@(ur,dr)}
         \quad\qquad\qquad
$Q_2:\quad\ $ \xymatrix{
*{\ci}
\ar@(dl,ul)
\ar@/_/@{<-}[r]\ar@/^/[r]
&*{\ci}\ar@(ur,dr)}\quad\qquad\qquad $Q_3:$
}
\xymatrix@R=35pt{
&*{\ci}
\ar@(ul,ur)
&\\
*{\ci}
\ar@(dl,ul)
\ar@/_/@{<-}[ru]\ar@/^/[ru]
\ar@/_/@{<-}[rr]\ar@/^/[rr]
&&*{\ci}\ar@(ur,dr)
\ar@/_/@{<-}[lu]\ar@/^/[lu]}\\[3mm]
\]
Applying Theorem \ref{kke} to representations of $Q_t$, we get the following criterion.

\begin{corollary}\label{kit}
Let
\[
A=\begin{bmatrix}
    A_{11}&\cdots & A_{1t} \\
    \vdots&\ddots&\vdots\\
    A_{t1}&\cdots & A_{tt} \\
  \end{bmatrix},\qquad
B=\begin{bmatrix}
   B_{11}&\cdots & B_{1t} \\
    \vdots&\ddots&\vdots\\
    B_{t1}&\cdots & B_{tt} \\
  \end{bmatrix}
\]
be conformally partitioned $m\times m$ complex matrices, in which all diagonal blocks are square. Let\/ $\pen\in\{\T,\Aster\}$ be fixed.
The following statements are equivalent:
\begin{itemize}
  \item[\rm(i)]
   $U^{-1}AU=B$, in which $U=U_1\oplus\dots\oplus U_t$ and each $U_i$ is $^{\pen}$unitary and the same size as $A_{ii}$.

  \item[\rm(ii)] The equality
\begin{equation}\label{gula}
\tr\bigl(A_{i_1i_2}^{(\varepsilon_1)}
A_{i_2i_3}^{(\varepsilon_2)}\cdots
A_{i_{\ell-1}i_{\ell}}^{(\varepsilon_{\ell-1})}
A_{i_\ell i_1}^{(\varepsilon_{\ell})}\bigr)=
\tr\bigl(B_{i_1i_2}^{(\varepsilon_1)}
B_{i_2i_3}^{(\varepsilon_2)}\cdots
B_{i_{\ell-1}i_{\ell}}^{(\varepsilon_{\ell-1})}
B_{i_{\ell}i_1}^{(\varepsilon_{\ell})}\bigr)
\end{equation}
with
\begin{equation}\label{kmj}
A_{ij}^{(\varepsilon)}:=
  \begin{cases}
    A_{ij} & \hbox{if }\varepsilon =1, \\
    A_{ji}^{\varepsilon} & \hbox{if }\varepsilon \ne 1,
  \end{cases}
                   \quad\quad B_{ij}^{(\varepsilon)}:=
  \begin{cases}
    B_{ij} & \hbox{if }\varepsilon =1, \\
    B_{ji}^{\varepsilon} & \hbox{if }\varepsilon \ne 1
  \end{cases}
\end{equation}
holds for all $\varepsilon_1,\dots,
\varepsilon_{\ell}\in\{1,\pen\}$, all $i_1,\dots,
i_{\ell}\in\{1,\dots,t\}$, and every natural number  $\ell$.

\item[\rm(iii)] The equality \eqref{gula} holds for all $\varepsilon_1,\dots,
\varepsilon_{\ell}\in\{1,\pen\}$, all $i_1,\dots,
i_{\ell}\in\{1,\dots,t\}$, and all  $\ell\le \varphi (3m)$, in which $m\times m$ is the size of $A$ and $B$, and $\varphi (n)$ is any bound
for the sufficient word length in Specht's criterion $($see Theorem {\rm\ref{kke}(b))}.

\end{itemize}
\end{corollary}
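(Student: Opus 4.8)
The plan is to read the block matrices $A$ and $B$ as complex matrix representations of the complete quiver $Q_t$ and then invoke Theorem~\ref{kju}. Let $d_1,\dots,d_t$ be the sizes of the diagonal blocks, so that $d_1+\dots+d_t=m$. I assign to the loop at vertex $i$ the diagonal block $A_{ii}$ (an arrow $i\to i$ whose matrix is $d_i\times d_i$), and to the arrow $j\to i$ of $Q_t$ with $i\ne j$ the off-diagonal block $A_{ij}$ (matrix of size $d_i\times d_j$). This agrees with the size convention for matrix representations, so $A$ becomes a matrix representation of $Q_t$ of dimension $(d_1,\dots,d_t)$, and likewise for $B$. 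The first step is the remark that statement (i) is exactly the assertion that $A$ and $B$ are $^{\pen}$unitarily isometric: writing $U=U_1\oplus\dots\oplus U_t$, the $(i,j)$ block of $U^{-1}AU$ is $U_i^{-1}A_{ij}U_j$, so $U^{-1}AU=B$ says precisely that $B_\alpha=U_v^{-1}A_\alpha U_u$ for every arrow $\alpha:u\to v$ of $Q_t$, which is \eqref{ljt}.

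Next I would identify the oriented cycles of $\widetilde{Q_t}$ with the trace monomials in \eqref{gula}. In $\widetilde{Q_t}$ there are exactly two arrows from $j$ to $i$: the original arrow carrying the block $A_{ij}$, and the starred arrow $\beta^{\star}$ of the opposite arrow $\beta:i\to j$ of $Q_t$, which carries the adjoint $\widetilde A_{\beta^{\star}}=A_{ji}^{\pen}$. By the convention \eqref{kmj} these are precisely $A_{ij}^{(1)}=A_{ij}$ and $A_{ij}^{(\pen)}=A_{ji}^{\pen}$, both of size $d_i\times d_j$. Hence, as $\varepsilon$ runs over $\{1,\pen\}$, the matrices $A_{ij}^{(\varepsilon)}$ run over exactly the two arrows of $\widetilde{Q_t}$ from $j$ to $i$, the case $i=j$ covering the loop and its star. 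Consequently a product $A_{i_1i_2}^{(\varepsilon_1)}\cdots A_{i_\ell i_1}^{(\varepsilon_\ell)}$ is exactly the product $\widetilde A_{\alpha_1}\cdots\widetilde A_{\alpha_\ell}$ along the closed walk $i_1\to i_\ell\to\cdots\to i_2\to i_1$ in $\widetilde{Q_t}$, so that its trace equals $\tr\widetilde A(\pi)$ for the corresponding cycle $\pi$. Thus ranging over all signs, indices, and lengths in \eqref{gula} is the same as ranging over all oriented cycles $\pi$ of $\widetilde{Q_t}$, whence (ii) is precisely the condition \eqref{q1i}, and Theorem~\ref{kju}(a) gives (i)$\Leftrightarrow$(ii).

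Finally, for (iii) I would specialize the length bound \eqref{wwk} to $Q_t$. Every ordered pair of vertices of $Q_t$ is joined by one arrow in each direction and every vertex carries one loop, so $m_{ij}=1$ for all $i,j$; hence the minimal $r$ with $r(r+1)/2\ge\max m_{ij}=1$ is $r=1$. Since $d_1+\dots+d_t=m$, the bound \eqref{wwk} becomes $\varphi((r+2)m)=\varphi(3m)$, and Theorem~\ref{kju}(b) yields (i)$\Leftrightarrow$(iii). The one place demanding care is the bookkeeping of the second paragraph: I must verify that the dictionary $\varepsilon\leftrightarrow$ (original vs.\ starred arrow) is compatible with both the trace convention $\tr\widetilde A(\pi)=\tr(\widetilde A_{\alpha_1}\cdots\widetilde A_{\alpha_\ell})$ and the source/target block sizes, so that every cycle of $\widetilde{Q_t}$ is realized by an admissible choice in \eqref{gula} and conversely. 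Once this correspondence is pinned down, the corollary is an immediate translation of Theorem~\ref{kju}.
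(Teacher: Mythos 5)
Your proposal is correct and takes essentially the same route as the paper, which states this corollary as an immediate specialization of Theorem~\ref{kju} to the complete quiver $Q_t$: your dictionary identifying the two arrows from $j$ to $i$ in $\widetilde{Q_t}$ with $A_{ij}^{(1)}=A_{ij}$ and $A_{ij}^{(\pen)}=A_{ji}^{\pen}$ from \eqref{kmj}, and statement (i) with $^{\pen}$unitary isometry via \eqref{ljt}, is exactly the intended translation. Your bound computation ($m_{ij}=1$ for all $i,j$, hence $r=1$ and \eqref{wwk} becomes $\varphi(3m)$ since $d_1+\dots+d_t=m$) likewise matches the paper's claim in (iii).
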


\section{Proof of Theorems \ref{kke} and \ref{kju}}\label{huk}

It suffices to prove Theorem \ref{kju} since it is equivalent to Theorem \ref{kke}.

To prove Theorem \ref{kju}, we reduce the problem of classifying complex matrix representations of a quiver up to $^{\pen}$unitary isometry to the problem of classifying complex matrices up to $^{\pen}$unitary similarity. We then apply Specht's criterion for matrices under unitary similarity and its generalization by Jing \cite{jin} to matrices under complex orthogonal similarity.

\subsection{From matrix representations of a quiver up to $^{\pen\!}$unitary isometry to matrices up to $^{\pen\!}$unitary similarity}
\label{bdr}

Let $\pen\in\{\T,\Aster\}$.
For each quiver $Q$ and its complex matrix representation $A$, we construct a square complex matrix $M_Q(A)$ such that
\begin{equation}\label{jdfb}
\parbox[c]{0.8\textwidth}{matrix representations $A$ and $B$ are $^{\pen}$unitarily isometric\\
$\Longleftrightarrow$ $M_Q(A)$ and $M_Q(B)$ are $^{\pen}$unitarily similar.}
\end{equation}
Examples of $M_Q(A)$ are given in \cite{ser_func} and \cite[Section 2.3]{ser_unit}, in which Littlewood's algorithm for reducing  complex matrices to canonical form under unitary similarity is extended to unitary representations of quivers.
An analogous construction was used in \cite[Lemma 2]{ger} to reduce the problem of classifying $(p+q)$-tuples of complex $n\times n$ matrices $(A_1,\dots,A_p;B_1,\dots,B_q)$ up to transformations \[(U^{-1}A_1U,\dots,U^{-1}A_pU;\:U^{\T}B_1U,\dots,
U^{\T}B_qU),\qquad U\text{ is unitary}\] to the problem of classifying square
matrices up to unitary similarity.

\subsubsection{An example}
\label{mtp}
Let $A=(A_{\alpha },\dots,A_{\zeta})$ be a complex matrix representation of the quiver \eqref{kte}. Define the matrix
\begin{equation}\label{mmf}
M(A):=\left[\begin{array}{ccc|cc|cc}
I&0&0&0&0&A_{\alpha }&A_{\beta }\\
0&2I&0&I&0&A_{\gamma}&0\\
0&0&3I&0&I&A_{\delta }&A_{\zeta}\\ \hline
0&0&0&4I&0&I&0\\
0&0&0&0&5I&A_{\varepsilon }&I\\ \hline
0&0&0&0&0&6I&0\\
0&0&0&0&0&0&7I
\end{array}\right].
\end{equation}
For each complex matrix representation $B=(B_{\alpha },\dots,B_{\zeta})$ of $Q$ of the same dimension as $A$, we replace the blocks $A_{\alpha },\dots,A_{\zeta}$ of $M(A)$ with $B_{\alpha },\dots,B_{\zeta}$ and denote the matrix obtained by $M(B)$.
Let us prove that
\eqref{jdfb} holds with
$M(A)$ and $M(B)$ instead of
$M_Q(A)$ and $M_Q(B)$.

$\Longrightarrow$. Let $A$ and $B$ be $^{\pen}$unitarily isometric. Then $B$ is represented in the form \eqref{grp}, in which $U_1,U_2,U_3$ are $^{\pen}$unitary matrices. Writing
\begin{equation}\label{n6e}
U=\diag(U_1,U_2,U_3,U_2,U_3,U_2,U_3),
\end{equation}
we obtain
\begin{equation}\label{feo}
U^{-1}M(A)U=\quad\begin{MAT}(@){rccccccc}
&\scriptstyle U_1&\scriptstyle U_2&\scriptstyle U_3&\scriptstyle U_2&\scriptstyle U_3&\scriptstyle U_2&\scriptstyle U_3\\
\scriptstyle U_1^{-1}&I&&&&&A_{\alpha }&A_{\beta }\\
\scriptstyle U_2^{-1}&&2I&&I&&A_{\gamma}&\\
\scriptstyle U_3^{-1}&&&3I&&I&A_{\delta }&A_{\zeta}\\
\scriptstyle U_2^{-1}&&&&4I&&I&\\
\scriptstyle U_3^{-1}&&&&&5I&A_{\varepsilon }&I\\
\scriptstyle U_2^{-1}&&&&&&6I&\\
\scriptstyle U_3^{-1}&&&&&&&7I
\addpath{(1,0,4)rrrrrrruuuuuuulllllllddddddd}
\addpath{(1,4,2)rrrrrrr}
\addpath{(1,2,2)rrrrrrr}
\addpath{(6,0,2)uuuuuuu}
\addpath{(4,0,2)uuuuuuu}\\
\end{MAT}\quad=M(B).
\end{equation}

$\Longleftarrow$.
Let $M(A)$ and $M(B)$ be $^{\pen}$unitarily similar; that is,  $M(A)U=UM(B)$ with $^{\pen}$unitary $U$. Partition $U=[U_{ij}]$ conformally to $M(A)$. Equating the blocks of $M(A)U$ and $UM(B)$ along the block diagonals starting from the lower left corner, we find that $U$ is upper block triangular. Since $U$ is $^{\pen}$unitary, it is block diagonal. Equating the blocks of $M(A)U$ and $UM(B)$ at the places of $I$'s in \eqref{mmf}, we find that $U$ has the form \eqref{n6e}. By \eqref{feo}, $A_{\alpha },A_{\beta },\dots,A_{\zeta}$ are transformed as in \eqref{grp}, which proves \eqref{jdfb}.

\subsubsection{The general case}

\begin{definition}\label{bit}
Let $Q$ be a quiver with vertices $1,\dots,t$. For each pair of vertices $(i,j)$, let
\begin{equation}\label{,,g}
\alpha _{ij:1},\ \alpha _{ij:2}, \ \dots,\ \alpha_{ij:m_{ij}}:j\longrightarrow i
\end{equation}
be all the arrows from $j$ to $i$ (the number $m_{ij}$ is called the \emph{multiplicity of} $j\longrightarrow i$). Let $r$ be the minimal natural number such that $r(r+1)/2\ge\max{m_{ij}}$ (see \eqref{fvk}).
Define the $(r+2)t\times (r+2)t$ partitioned matrix \begin{equation}\label{rwt}
M_Q(x):= \begin{bmatrix}
D_1 & I_t& X_1&X_2&X_3&\dots&X_r
 \\
&D_2&I_t&X_{r+1}&X_{r+2}&\dots&X_{2r-1}
  \\
&&D_3&I_t&X_{2r}&\dots&X_{3r-3}\\
&&& \ddots&\ddots&\ddots&\vdots\\
&&&&D_r&I_t&X_{\frac{r(r+1)}2}\\
&&&&&D_{r+1}&I_t\\
0&&&&&&D_{r+2}\\
 \end{bmatrix},
\end{equation}
in which all blocks are $t\times t$,
\[
D_1:=\diag(1,2,\dots,t),\
D_2:=\diag(t+1,t+2,\dots,2t),\ \dots,
\]
(thus, the main diagonal of $M_Q(x)$ is $(1,2,3,\dots,(r+2)t)$)
and
\[
X_{\xi}:=
\begin{bmatrix}
  x_{11:\xi} &\dots&
  x_{1t:\xi} \\
  \vdots&\ddots&\vdots\\
 x _{t1:\xi} &\dots&
 x_{tt:\xi} \\
\end{bmatrix},\quad
x_{ij:\xi}:=
     0\hbox{ if }\xi>m_{ij}.
\]
\end{definition}

Thus, $M_Q(x)$ depends on parameters
\begin{equation}\label{b97}
x _{ij:1},\ x_{ij:2},\ \dots,\ x_{ij:m_{ij}}\qquad (i,j=1,\dots,t),
\end{equation}
which correspond to the arrows \eqref{,,g}.

\begin{example}
If $Q$ is the quiver \eqref{2j}, then
\begin{equation}\label{nkl}
M_Q(x)=\left[
\begin{array}{c|c|c|c}
\vphantom{\text{\LARGE Q}}
 \!\!\begin{smallmatrix}
   {\scriptstyle 1} \\&
   {\scriptstyle 2}\\&&
   {\scriptstyle 3}
  \end{smallmatrix}\!\!\!&
 \!\!\begin{smallmatrix}
   {\scriptstyle 1} \\&
   {\scriptstyle 1}\\&&
   {\scriptstyle 1}
  \end{smallmatrix}\!\!\!
  &X_1&X_2
                           \\[2mm]\hline
  0\vphantom{\text{\LARGE Q}}&\!\!
   \begin{smallmatrix}
   {\scriptstyle 4} \\&
   {\scriptstyle 5}\\&&
   {\scriptstyle 6}
  \end{smallmatrix}\!\!\!&
 \!\!\begin{smallmatrix}
   {\scriptstyle 1} \\&
   {\scriptstyle 1}\\&&
   {\scriptstyle 1}
  \end{smallmatrix}\!\!\!
  & 0
                                 \\[2mm]\hline
  0\vphantom{\text{\LARGE Q}} &0&\!\!\begin{smallmatrix}
   {\scriptstyle 7} \\&
   {\scriptstyle 8}\\&&
   {\scriptstyle 9}
  \end{smallmatrix}\!\!\! &
  \!\!\begin{smallmatrix}
   {\scriptstyle 1} \\&
   {\scriptstyle 1}\\&&
   {\scriptstyle 1}
  \end{smallmatrix}\!\!\!
                               \\[2mm]\hline
  0\vphantom{\text{\LARGE Q}}&0&0&\!\!\!\begin{smallmatrix}
   {\scriptstyle 10}\! \\&
   \!{\scriptstyle 11}\!\\&&
   \!{\scriptstyle 12}\!\!
  \end{smallmatrix}\!\!\!
  \\[2mm]
\end{array}\right],
\end{equation}
in which
\[
X_1:=\begin{bmatrix}
0&x_{\alpha }&x_{\beta }\\
0&x_{\gamma }&0\\
0&x_{\delta}&x_{\zeta}
     \end{bmatrix},\qquad
X_2:=\begin{bmatrix}
0&0&0\\
0&0&0\\
0&x_{\varepsilon}&0
     \end{bmatrix}.
\]
\end{example}

\begin{lemma}\label{njp}
Let $Q$ be a quiver with vertices $1,\dots,t$ and arrows \eqref{,,g}, and let $M_Q(x)$ be the parameter matrix \eqref{rwt}.
For each complex matrix representation $A$ of $Q$, denote by $M_Q(A)$ the block matrix obtained from $M_Q(x)$ by replacing the parameters \eqref{b97} with
\[
A_{\alpha _{ij:1}},\ A_{\alpha _{ij:2}}, \ \dots,\ A_{\alpha_{ij:m_{ij}}},
\]
replacing each other nonzero entry $a$ with the scalar block $aI$, and replacing the zero entries with the zero blocks of suitable sizes. Then $M_Q(A)$ is  correctly constructed and \eqref{jdfb} holds.
\end{lemma}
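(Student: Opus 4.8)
The plan is to run, for an arbitrary quiver $Q$, exactly the two-directional argument already verified for the single quiver \eqref{2j} in Section \ref{mtp}; the only genuinely new work is the bookkeeping of block sizes and of the fine diagonal of $M_Q(x)$. I index the fine blocks of $M_Q(x)$ by pairs $(p,i)$ with $p\in\{1,\dots,r+2\}$ (the coarse block-row) and $i\in\{1,\dots,t\}$ (the vertex), and I assign to slot $(p,i)$ the size $d_i$. For correct construction I check dimension compatibility: the diagonal scalar $(p-1)t+i$ at slot $(p,i)$ becomes $((p-1)t+i)I_{d_i}$, each superdiagonal $I_t$ becomes $\diag(I_{d_1},\dots,I_{d_t})$, and the $(i,j)$ entry of each $X_\xi$ becomes $A_{\alpha_{ij:\xi}}$, which has size $d_i\times d_j$ precisely because $\alpha_{ij:\xi}\colon j\to i$ (or a zero block when $\xi>m_{ij}$). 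Since every nonzero scalar of $M_Q(x)$ sits on the fine diagonal ($i=j$), the symbol ``$aI$'' is unambiguously $aI_{d_i}$, and each zero is replaced by the zero block dictated by its row and column slots; hence $M_Q(A)$ is a well-defined square matrix of order $(r+2)(d_1+\dots+d_t)$.

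For the forward implication, given ${}^{\pen}$unitary $U_1,\dots,U_t$ realizing the isometry $B_\alpha=U_v^{-1}A_\alpha U_u$, I put $V:=\diag(U_1,\dots,U_t)$ and take $U:=\diag(V,\dots,V)$ with $r+2$ copies of $V$, which is ${}^{\pen}$unitary. A block-by-block check gives $U^{-1}M_Q(A)U=M_Q(B)$: conjugation by $V$ fixes each scalar diagonal block and each identity superdiagonal, and sends the $(i,j)$ entry $A_{\alpha_{ij:\xi}}$ of $X_\xi$ to $U_i^{-1}A_{\alpha_{ij:\xi}}U_j=B_{\alpha_{ij:\xi}}$, which is the $(i,j)$ entry of the corresponding block of $M_Q(B)$. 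This is the general form of \eqref{feo}.

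Conversely, suppose $M_Q(A)U=UM_Q(B)$ with $U$ ${}^{\pen}$unitary. Write $M_Q(A)=\Lambda+N_A$ and $M_Q(B)=\Lambda+N_B$ with common diagonal $\Lambda=\diag(1\cdot I_{d_1},2I_{d_2},\dots,(r+2)t\cdot I_{d_t})$ and strictly block-upper-triangular $N_A,N_B$. The decisive structural fact is that the natural fine order carries the strictly increasing values $\lambda_{(p,i)}=(p-1)t+i=1,\dots,(r+2)t$, and that every nonzero fine block of $N_A$ and $N_B$ lies strictly above this diagonal: a nonzero block occupies a coarse position $(p,q)$ with $q>p$, whence $\lambda_{(q,j)}-\lambda_{(p,i)}=(q-p)t+(j-i)\ge t-(t-1)>0$. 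Rewriting the relation as $\Lambda U-U\Lambda=UN_B-N_AU$ and reading off the $((p,i),(q,j))$ block, the scalar factor $\lambda_{(p,i)}-\lambda_{(q,j)}$ multiplies $U_{(p,i),(q,j)}$, while the right-hand side involves only entries of $U$ strictly farther below the diagonal; sweeping the sub-diagonals from the lower-left corner, where the right-hand side is empty, forces $U$ to be block upper triangular. I expect \textbf{this triangularization step to be the main point to get right}, since it is exactly where the pairwise-distinct integer diagonal built into Definition \ref{bit} does all the work.

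It then remains to run the routine consequences. A block-upper-triangular $U$ with $U^{\pen}U=I$ is block diagonal with ${}^{\pen}$unitary diagonal blocks: inductively, the $(k,k)$ block of $U^{\pen}U=I$ gives $U_{kk}^{\pen}U_{kk}=I$, and then the $(k,b)$ blocks with $b>k$ give $U_{kb}=0$. Next, equating the coarse $(p,p+1)$ blocks, i.e. the identity superdiagonals, forces $V_{p+1}=V_p$, so the coarse diagonal blocks of $U$ all coincide and $U=\diag(V,\dots,V)$ with $V=\diag(U_1,\dots,U_t)$, each $U_i$ being ${}^{\pen}$unitary. Finally, equating the coarse blocks carrying $X_\xi$ yields $A_{\alpha_{ij:\xi}}U_j=U_iB_{\alpha_{ij:\xi}}$; because $r(r+1)/2\ge\max m_{ij}$ guarantees that every arrow $\alpha\colon u\to v$ of $Q$ occupies some slot $x_{ij:\xi}$ with $i=v$ and $j=u$, we recover $B_\alpha=U_v^{-1}A_\alpha U_u$ for every arrow, so $A$ and $B$ are ${}^{\pen}$unitarily isometric. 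This establishes \eqref{jdfb} and the lemma.
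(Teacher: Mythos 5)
Your proof is correct and follows essentially the same route as the paper: the same forward conjugation by $U=\diag(V,\dots,V)$, and the same converse strategy of sweeping sub-diagonals from the lower-left corner (where the pairwise-distinct integer diagonal forces triangularity), then invoking $^{\pen}$unitarity for block-diagonality, the superdiagonal identities for $V_{p+1}=V_p$, and the $X_\xi$ positions to recover $B_\alpha=U_v^{-1}A_\alpha U_u$. The only difference is organizational: you run one sweep at the fine block level via $\Lambda U-U\Lambda=UN_B-N_AU$, thereby merging into a single step what the paper does in two (coarse triangularization of $U$, then fine splitting of each $U^{(i)}$ from $\widehat D_iU^{(i)}=U^{(i)}\widehat D_i$), and you also spell out the standard fact, asserted without proof in the paper, that a block-upper-triangular $^{\pen}$unitary matrix is block diagonal.
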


\subsubsection{Proof of Lemma \ref{njp}}\label{dru}
Let $A$ be a complex matrix representation of dimension $(d_1,\dots,d_t)$ of $Q$. Substituting it into \eqref{rwt}, we get the block matrix
\begin{equation}\label{rkr}
 M_Q(A)=\begin{bmatrix}
  \widehat D_1 & \widehat I_d&
  \widehat{A}_1&\dots&\widehat{A}_{r}\\
  &\widehat D_2 &
  \widehat I_d&\ddots&\vdots\\
 &&\widehat D_{3}&\ddots&\widehat{A}_{\frac{r(r+1)}2}\\
 &&&\ddots&\widehat I_d\\
 0&&&&\widehat D_{r+2}\\
 \end{bmatrix},
\end{equation}
in which $d:=d_1+\dots+d_t$, each $t\times t$ block of \eqref{rwt} becomes a $d\times d$ block that is partitioned into $t$ horizontal strips of sizes $d_1,\dots,d_t$ and $t$ vertical strips of the same sizes; namely,
\begin{align*}
&\widehat I_d=\diag(I_{d_1},I_{d_2},\dots,I_{d_t})\\
&\widehat D_1=\diag(1I_{d_1},2I_{d_2},\dots,tI_{d_t})\\
&\widehat D_2=\diag((t+1)I_{d_1},(t+2)I_{d_2},\dots,2tI_{d_t}) \\ & \ \vdots
\end{align*}
 and
 \begin{equation}\label{lsk}
\widehat{A}_{\xi}:=
\begin{bmatrix}
  A_{11:\xi} &\dots&A_{1t:\xi} \\
  \vdots&\ddots&\vdots\\
   A_{t1:\xi} &\dots&A_{tt:\xi} \\
\end{bmatrix} \quad\text{with }
 A_{ij:\xi}:=
   \begin{cases}
     A_{\alpha _{ij:\xi}} & \hbox{if }\xi\le
     m_{ij}, \\
     0 & \hbox{if }\xi>m_{ij}.
   \end{cases}
 \end{equation}

Let us prove that \eqref{jdfb} holds for every two complex matrix representations $A$ and $B$ of $Q$.

$\Longrightarrow$. Let $A$ and $B$ be $^{\pen}$unitarily isometric; that is, \eqref{ljt} holds for some $^{\pen}$unitary matrices $U_1,\dots,U_t$. Then $M_Q(B)=U^{-1}M_Q(A)U$ with \[U:=\diag(U_1,\dots,U_t;\dots;U_1,\dots,U_t).
\]

$\Longleftarrow$.
Suppose that $M_Q(A)U=UM_Q(B)$ with a $^{\pen}$unitary matrix $U$.
Partition $U$ into $(r+2)^2$ blocks $U_{ij}$ conformally to \eqref{rkr}.
Equating the blocks of $M_Q(A)U$ and $UM_Q(B)$ along the block diagonals starting from the lower left corner, we find that $U$ is upper block triangular. Since $U$ is $^{\pen}$unitary, $U$ is block diagonal; that is, it has the form $U=\diag(U^{(1)},\dots, U^{(r+2)})$. Since $\widehat D_iU^{(i)}=U^{(i)}\widehat D_i$, each $U^{(i)}$ is block diagonal too: $U^{(i)}=\diag(U^{(i)}_1,\dots, U^{(i)}_t).$ Equating the blocks of $M_Q(A)U$ and $UM_Q(B)$ at the places of $\widehat I_d$, we find that $U^{(1)}=\dots=U^{(r+2)}$, and so
\[
U=\diag(U_1,\dots, U_t;\,U_1,\dots, U_t;\,\dots\, ; U_1,\dots, U_t),
\]
in which every $U_i$ is a $d_i\times d_i$ $^{\pen}$unitary matrix. The equalities
\[
\widehat{A}_{\xi}\diag(U_1,\dots, U_t)
=\diag(U_1,\dots, U_t)\widehat{B}_{\xi},\quad \xi=1,\dots,{r(r+1)}/2
\]
ensure that $A_{\alpha _{ij:\xi}}U_j= U_iB_{\alpha _{ij:\xi}}$ for each arrow $\alpha _{ij:\xi}:j\longrightarrow i$ of $Q$, and so the complex matrix representations $A$ and $B$ are $^{\pen}$unitarily isometric. The proof of Lemma \ref{njp} is complete.

\begin{remark} \label{rrem}
A matrix that is simpler than \eqref{rwt} can be constructed for most concrete quivers. For example,
Section \ref{mtp} shows that the matrix
\begin{equation*}\label{mjr}
M(x):=\begin{bmatrix}
1&0&0&0&0&x_{\alpha }&x_{\beta }\\
0&2&0&1&0&x_{\gamma}&0\\
0&0&3&0&1&x_{\delta }&x_{\zeta}\\
0&0&0&4&0&1&0\\
0&0&0&0&5&x_{\varepsilon}&1\\
0&0&0&0&0&6&0\\
0&0&0&0&0&0&7
\end{bmatrix}
\end{equation*}
can be used in Lemma \ref{njp} instead of \eqref{nkl}.
\end{remark}

\begin{remark}
\label{jyr}
A quiver is \emph{unitarily wild} if the problem of classifying its unitary representations \emph{contains} the problem of classifying unitary representations of the quiver ${\ci\!\!\righttoleftarrow}$; that is, it contains the problem of classifying square complex matrices up to unitary similarity.
By Lemma \ref{njp}, the problem of classifying unitary representations of each quiver is \emph{contained} in the problem of classifying  unitary representations of the quiver ${\ci\!\!\righttoleftarrow}$.
Therefore, the problems of classifying unitary representations have
the same complexity for all unitarily wild quivers. Moreover, a classification of unitary representations of any of them would imply the classification of unitary representations of each quiver.
By \cite[Section 2.3]{ser_unit}, all connected quivers are unitarily wild, except for the simplest quivers $\ci$ and ${\ci\!\!\longrightarrow\!\!\ci}$.
The notion of unitarily wild matrix problems is analogous to the notion of wild matrix problems: a matrix problem is \emph{wild} if it contains the problem of classifying matrix pairs up to similarity. By \cite{bel}, the latter problem contains the problem of classifying representations of an arbitrary quiver and an arbitrary partially ordered set.
\end{remark}

\subsection{Proof of Theorem \ref{kke}}\label{key}

(a) Let $\mathcal A=(\mathcal A_{\alpha },\mathcal U_v)$ and $\mathcal B=(\mathcal B_{\alpha },\mathcal V_v)$ be two ${}^{\pen}$unitary representations of a quiver $Q$ with vertices $1,\dots,t$ (see Definition \ref{hro}).

$\Longrightarrow$. \
Let $\cal A$ and $\cal B$ be isometric; that is, there exist isometries
$\varphi_1:\mathcal U_1\to \mathcal V_1,\dots,
\varphi_t:\mathcal U_t\to \mathcal V_t$ such that $\varphi_v\mathcal A_{\alpha }=\mathcal B_{\alpha
}\varphi _u$ for each arrow $\alpha
:u\longrightarrow  v$. Let
\begin{equation*}\label{ryb}
\pi:\quad\xymatrix{
{v_1}
\ar@{->}@/_2pc/[rrr]^{\gamma_{\ell}}
\ar@{<-}[r]^{\gamma_1}&
v_2\ar@{<-}[r]^{\gamma_2\ \ } &
{\ \cdots\ }&{v_{\ell}}
\ar@{->}[l]_{\ \gamma_{{\ell}-1}\ }},\qquad {\ell}\ge 1
\end{equation*}
be a cycle in $\widetilde Q$ (thus, each $\gamma_i$ is either $\alpha_i$ or $\alpha_i^{\ast}$, where $\alpha_i$ is an arrow of $Q$). Then
\begin{align*}
\tr {\widetilde{\mathcal A}}(\pi)
&=
\tr \left(\widetilde{\mathcal A}_{\gamma_1}
\widetilde{\mathcal A}_{\gamma_2}\cdots \widetilde{\mathcal A}_{\gamma_{\ell}}\right)\\
&=
\tr \left(\varphi_{v_1}^{-1} \widetilde{\mathcal B}_{\gamma_1}\varphi_{v_2}\cdot\varphi_{v_2}^{-1}
\widetilde{\mathcal B}_{\gamma_2}\varphi_{v_3}\cdots\varphi_{v_{\ell}}^{-1} \widetilde{\mathcal B}_{\gamma_{\ell}}\varphi_{v_1}\right)
   \\
   &=
\tr \left(\varphi_{v_1}^{-1}\widetilde{\mathcal B}_{\gamma_1}
\widetilde{\mathcal B}_{\gamma_2}\cdots\widetilde{\mathcal B}_{\gamma_{\ell}}\varphi_{v_1}\right)\\
  &=
\tr \left(\widetilde{\mathcal B}_{\gamma_1}
\widetilde{\mathcal B}_{\gamma_2}\cdots\widetilde{\mathcal B}_{\gamma_{\ell}}\right)
=\tr \widetilde{\mathcal B}(\pi).
\end{align*}

$\Longleftarrow$. \
Let
\begin{equation}\label{kuye}
\text{$\tr \widetilde{\mathcal A}(\pi)=\tr \widetilde{\mathcal B}(\pi)$\quad for each oriented cycle $\pi$ in $\widetilde Q$.}
\end{equation}
Choosing orthonormal bases in the spaces of representations $\mathcal A$ and $\mathcal B$, we obtain two matrix representations $A$ and $B$ of $Q$. By Lemma \ref{njp}, it suffices to prove that the matrices $M_Q(A)$ and $M_Q(B)$ are $^{\pen}$unitarily similar. Due to Specht's criterion \cite{spe} for complex matrices under unitary similarity and its generalization by Jing \cite{jin} to complex matrices under complex orthogonal similarity (see Section \ref{intr}),  $M_Q(A)$ and $M_Q(B)$ are $^{\pen}$unitarily similar if and only if
\begin{equation*}\label{guw}
 \tr w(M_Q(A),M_Q(A)^{\pen}) = \tr w(M_Q(B),M_Q(B)^{\pen})
\end{equation*}
for every word $w(x,y)$.

Let us consider the matrix
\[W(A)=[W_{ij}(A)]_{i,j=1}^{r+2}:=w(M_Q(A),M_Q(A)^{\pen})\]
that is partitioned into $(r+2)^2$ blocks as \eqref{rkr}.
Since
\[
\tr W(A)=\tr W_{11}(A)+\tr W_{22}(A)+\dots
+\tr W_{r+2,r+2}(A),
\]
it suffices to prove that
\[
\tr W_{\ell
\ell}(A)=\tr W_{\ell\ell}(B)
\qquad \text{for all }\ell=1,\dots,r+2.
\]

Each $W_{\ell\ell}(A)$ is a linear combination of products of blocks of the form $0$, $\widehat D_{i}$, $\widehat{A}_{j}$, and $\widehat{A}_{k}^{\,\pen}$ in $W(A)$.
Thus, it suffices to prove that
\begin{multline*}\label{afd}
\tr v(\widehat D_1,\dots,\widehat D_{r+2};\, \widehat A_1, \dots,\widehat A_{\rho };\, \widehat A_1^{\,\pen}, \dots,\widehat A_{\rho }^{\,\pen})\\
=\tr v(\widehat D_1,\dots,\widehat D_{r+2};\, \widehat B_1, \dots,\widehat B_{\rho };\, \widehat B_1^{\,\pen}, \dots,\widehat B_{\rho }^{\,\pen})
\end{multline*}
for each word
\[
v(x_1,\dots,x_{r+2};\, y_1,\dots,y_{\rho};\,
z_1,\dots,z_{\rho}),\quad
\rho :={r(r+1)}/2.
\]

Let us consider the matrix
\[
V(A)=[V_{ij}(A)]_{i,j=1}^t:   =v(\widehat D_1,\dots,\widehat D_{r+2};\, \widehat A_1, \dots,\widehat A_{\rho };\, \widehat A_1^{\,\pen}, \dots,\widehat A_{\rho }^{\,\pen})
\]
that is partitioned into $t$ horizontal strips and $t$ vertical strips  of sizes $d_1,\dots,d_t$ (as the blocks of \eqref{lsk}). Since \[\tr V(A)=\tr V_{11}(A)+\tr V_{22}(A)+
\dots+\tr V_{tt}(A),\] it suffices to prove that
\begin{equation*}\label{xcv}
\tr V_{l
l}(A)=\tr V_{ll}(B)
\qquad \text{for all }l=1,\dots,t.
\end{equation*}

Each nonzero $V_{ll}(A)$ is a linear combination of products of the form
\begin{equation}\label{nlo}
A_{l i_2:\xi_1}^{(\varepsilon_1)}
A_{i_2 i_3:\xi_2}^{(\varepsilon_2)}
A_{i_3 i_4:\xi_3}^{(\varepsilon_3)}
\cdots
A_{i_k l:\xi_k}^{(\varepsilon_k)},
\end{equation}
in which $k\ge 1$, $i_2,\dots,i_k\in\{1,\dots,t\}$,
$\varepsilon_1,\dots,\varepsilon_k\in\{1,\pen\}$, and
\begin{equation*}\label{aamd}
A_{ij:\xi}^{(\varepsilon)}:=
  \begin{cases}
    A_{ij:\xi} & \hbox{if }\varepsilon =1 \\
    A_{ji:\xi}^{\pen} & \hbox{if }\varepsilon =\pen
  \end{cases}\qquad\text{(compare with \eqref{kmj}).}
\end{equation*}
We must prove that
\begin{equation}\label{lkj}
\begin{split}
  & \tr \left(A_{l i_2:\xi_1}^{(\varepsilon_1)}
A_{i_2 i_3:\xi_2}^{(\varepsilon_2)}
A_{i_3 i_4:\xi_3}^{(\varepsilon_3)}
\cdots
B_{i_k l:\xi_k}^{(\varepsilon_k)}\right) \\
    =& \tr\left(
B_{l i_2:\xi_1}^{(\varepsilon_1)}
B_{i_2 i_3:\xi_2}^{(\varepsilon_2)}
B_{i_3 i_4:\xi_3}^{(\varepsilon_3)}
\cdots
B_{i_k l:\xi_k}^{(\varepsilon_k)}\right).
\end{split}
\end{equation}

For each natural number $n$, let $[n]\in\{1,2,\dots,t\}$ be the vertex of $Q$ such that
$[n]\equiv n\pmod t$.
The matrices of the product \eqref{nlo} define the complex matrix representation
\begin{equation*}\label{rie}
\xymatrix@=70pt{
{d_{[l]}}
\ar@{->}@/_2pc/[rrr]
  ^{A_{i_k l:\xi_k}^{(\varepsilon_k)}}
\ar@{<-}[r]
  ^{A_{l i_2:\xi_1}^{(\varepsilon_1)}}&
d_{[i_2]}\ar@{<-}[r]
^{A_{i_2 i_3:\xi_2}^{(\varepsilon_2)}\ \ } &
{\ \cdots\ }&{d_{[i_k]}}
\ar@{->}[l]
  _{\ A_{i_{k-1} i_k:\xi_{k-1}}^{(\varepsilon_{k-1})}\ }}
\end{equation*}
of the oriented cycle
\[
\xymatrix@=70pt{
{[l]}
\ar@{->}@/_2pc/[rrr]
  ^{\alpha _{i_k l:\xi_k}^{(\varepsilon_k)}}
\ar@{<-}[r]
  ^{\alpha _{l i_2:\xi_1}^{(\varepsilon_1)}}&
[i_2]\ar@{<-}[r]
^{\alpha _{i_2 i_3:\xi_2}^{(\varepsilon_2)}\ \ } &
{\ \cdots\ }&{[i_k]}
\ar@{->}[l]
  _{\ \alpha _{i_{k-1} i_k:\xi_{k-1}}^{(\varepsilon_{k-1})}\ }}
\]
in $\widetilde Q$, in which $\alpha ^{(1)}:=\alpha$ and $\alpha ^{(\pen)}:=\alpha^{\star}$ for every arrow $\alpha $ of $Q$.

The equality \eqref{lkj}
holds by \eqref{kuye}, which proves the statement (a) in Theorem \ref{kke}.
\medskip

(b) The bound \eqref{wwk} holds since the matrix \eqref{rkr} is of size $n\times n$, in which $n=(r+2)(d_1+\dots+d_t)$.

\section*{Acknowledgements}

V. Futorny is
supported in part by  CNPq grant (301320/2013-6) and by
FAPESP grant (2014/09310-5). This work was done during the visit of V.V.~Sergeichuk to the University of S\~ao Paulo;
he is grateful to the university for hospitality and the FAPESP
for financial support (grant 2015/05864-9).

\end{document}